\theoremstyle{plain}
\newtheorem{thm}{Theorem}
\newtheorem{lm}{Lemma}
\newtheorem{clm}{Claim}
\title{Covering three-tori with cubes}
\author{Ilya I. Bogdanov\footnote{Moscow Institute of Physics and Technology, Laboratory of Combinatorial and Geometric Structures, Moscow, Russia}
\and
Oleg Grigoryan\footnote{Yandex School of Data Analysis, Moscow, Russia}%
\and
Maksim Zhukovskii\footnotemark[\value{footnote}]
}
\date{}
\begin{document}



\maketitle

\begin{abstract}
Let $\mu(\varepsilon)$ be the minimum number of cubes of side $\varepsilon$ needed to cover the unit three-torus $[\mathbb{R}/\mathbb{Z}]^3$. We prove new lower and upper bounds for $\mu(\varepsilon)$ and find the exact value for all $\varepsilon\geq\frac{7}{15}$ and all $\varepsilon\in\left[\frac{1}{r+1/(r^2+r+1)},\frac{1}{r-1/(r^2-1)}\right)$ for any integer $r\geq 3$.
\end{abstract}

\maketitle

\section{Introduction}

Let $d$ be a positive integer, and let $\varepsilon\in(0,1)$. Consider the torus $T^d:=[\mathbb{R}/\mathbb{Z}]^d$ and the set $\mathcal{J}_{\varepsilon}$ of `sub-cubes' of the form $\{(x_1,\ldots,x_d):\,x_i\in[x^0_i,x^0_i+\varepsilon]\}$. The question is, what is the minimum number $\mu:=\mu(d;\varepsilon)$ of sets $A_1,\ldots,A_{\mu}$ from $\mathcal{J}_{\varepsilon}$ needed to cover $T^d$ (i.e., $T^d=A_1\cup\ldots\cup A_{\mu}$)?

In~\cite{2-dimension}, it is proven that
\begin{equation}
\mu\geq \lceil 1/\varepsilon\rceil^{(d)},
\label{general_lower_bound}
\end{equation}
where $\lceil x\rceil^{(i)}=\left\lceil x\lceil x\rceil^{(i-1)}\right\rceil$ and $\lceil x\rceil^{(1)}=\lceil x\rceil$. Moreover, it is shown that, for $d=2$, this lower bound is sharp, i.e. $\mu(2;\varepsilon)=\bigl\lceil \frac1\varepsilon\bigl\lceil \frac1\varepsilon\bigl\rceil\bigl\rceil$.

In our paper, we consider $d=3$. Since $\mu(1;\varepsilon)=\bigl\lceil \frac1\varepsilon\bigl\rceil$ and $\mu(2;\varepsilon)=\bigl\lceil \frac1\varepsilon\bigl\lceil \frac1\varepsilon\bigl\rceil\bigl\rceil$, we get that
\begin{equation}
  \left\lceil \frac1\varepsilon\left\lceil \frac1\varepsilon\left\lceil \frac1\varepsilon\right\rceil\right\rceil\right\rceil\leq\mu(3;\varepsilon)\leq\left\lceil \frac 1\varepsilon\left\lceil \frac1\varepsilon\right\rceil\right\rceil\cdot \left\lceil \frac 1\varepsilon\right\rceil.
\label{known-bounds}
\end{equation}
In~\cite{2-dimension}, the authors also noticed that the lower bound
~
in~\ref{known-bounds}
for $d=3$ is not sharp. For example, $\mu\bigl(3;\frac37\bigr)>\bigl\lceil \frac73\bigl\lceil \frac73\bigl\lceil \frac73\bigr\rceil\bigr\rceil\bigr\rceil$.
Unfortunately, no general bound better than~(\ref{known-bounds}) is known.

The case of large $d$ was studied more extensively. Note that (\ref{general_lower_bound}) implies $\mu(d;\varepsilon)\geq (1/\varepsilon+o(1))^d$. From a general result by Erd\H{o}s and Rogers~\cite{ErdosRogers} it follows that $1/\varepsilon$ is the correct base of the exponent, i.e. $\mu(d;\varepsilon)=(1/\varepsilon+o(1))^d$. More precisely, they proved that $\mu(d;\varepsilon)=O(d\log d(1/\varepsilon)^d)$. In the discrete case (i.e. $[\mathbb{Z}/t\mathbb{Z}]^d$ is covered by cubes with side $s\in[t]:=\{1,2,\dots,t\}$, $\varepsilon=s/t$) a direct application of the probabilistic method allows to put away the $\log d$ factor (see~\cite{Bollobas}), and it is easy to see that it implies the same bound for any $\varepsilon$ (both rational and irrational): $\mu(d;\varepsilon)=O(d(1/\varepsilon)^d)$ while (\ref{general_lower_bound}) gives $\mu(d;\varepsilon)=\Omega((1/\varepsilon)^d)$. Note that, as we show in Section~\ref{integer_lattices}, the discrete and continuous problems are in some sense equivalent. 
 Let us also mention that, for $\varepsilon=\frac{2}{r}$, $r\in\mathbb{N}$, the corresponding packing problem (finding $\nu(d;\varepsilon)$ --- the maximum number of non-overlapping sub-cubes with side $\varepsilon$ inside $T^d$) is related to the problem of finding Shannon capacity $c(C_r)$ of a simple cycle on $r$ vertices~\cite{Shannon}: $c(C_r)=\sup_{d\geq 1}(\nu(d;2/r))^{1/d}$ (the same connection works for other rational $\varepsilon$ but the respective graphs are not so foreseeable). For even $r$, $c(C_r)=r/2$ is straightforward.  It was shown by Lov\'{a}sz~\cite{Lovasz} that $c(C_5)=\sqrt{5}$. For larger odd $r$, finding $c(C_r)$ is still open.

In this paper, we have found the exact value of $\mu(3;\varepsilon)$ for $\varepsilon\geq 7/15$. We have also found exact values of $\mu(3;\varepsilon)$ for $\varepsilon$ close to $1/r$, $r\in\mathbb{N}$. In Section~\ref{new}, we state new results. 

\smallskip
For convenience, for any $a,b\in \mathbb R/\mathbb Z$, we denote by $|a-b|$ the smallest $\nu\in[0,1)$ such that $a=b\pm\nu$. Similarly, for every $a,b\in\mathbb Z/t\mathbb Z$, we denote by $|a-b|$ the smallest $\nu\in\{0,1,\dots,t-1\}$ such that $a=b\pm\nu$.

\section{New results}
\label{new}



Since, for an integer $r\geq 2$ and $\varepsilon\in\left[\frac{1}{r},\frac{1}{r-1/r^2}\right)$, the lower bound and the upper bound in (\ref{known-bounds}) are equal, the value of $\mu(3;\varepsilon)$ is straightforward and equals $r^3$. We have also found left-neighborhoods of all $1/r$ where the lower bound 
in~\eqref{known-bounds} is tight
(notice that for such $\varepsilon$ the difference between the upper and the lower bounds is, conversely, large).

\begin{thm}
Let $r\in\mathbb{N}$. If $\varepsilon\in\left[\frac{1}{r+ 1/(r^2+r+1)},\frac{1}{r}\right)$, then the lower bound is tight, i.e. $\mu(3;\varepsilon)=r^3 + r^2 + r + 1$.
\label{left_interval}
\end{thm}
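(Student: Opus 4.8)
\emph{Lower bound.} By \eqref{general_lower_bound} it suffices to check that $\lceil1/\varepsilon\rceil^{(3)}=r^3+r^2+r+1$ for every $\varepsilon$ in the stated interval. Write $s:=1/\varepsilon$, so $r<s\le r+\tfrac{1}{r^2+r+1}$. Then $\lceil s\rceil=r+1$; next $r^2+r<s(r+1)\le r^2+r+\tfrac{r+1}{r^2+r+1}<r^2+r+1$, so $\lceil s(r+1)\rceil=r^2+r+1$; and finally $r^3+r^2+r<s(r^2+r+1)\le r^3+r^2+r+1$, so $\lceil s(r^2+r+1)\rceil=r^3+r^2+r+1$, as claimed.

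\emph{Reduction for the upper bound.} A cube of side $\varepsilon$ sits inside a cube of side $\varepsilon'>\varepsilon$, so $\varepsilon\mapsto\mu(3;\varepsilon)$ is non-increasing; hence it suffices to cover $T^3$ by $r^3+r^2+r+1$ cubes of side $\varepsilon_0:=\tfrac{1}{r+1/(r^2+r+1)}=\tfrac{r^2+r+1}{r^3+r^2+r+1}$, the left endpoint. Put $m:=r^2+r+1$ and $N:=r^3+r^2+r+1$; since $N=rm+1$ we have $\gcd(m,N)=1$, so by the continuous--discrete equivalence of Section~\ref{integer_lattices} it is enough to cover $[\mathbb{Z}/N\mathbb{Z}]^3$ by $N$ discrete cubes of side $m$. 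Throughout one uses the nested relations $N=rm+1$, $m=r(r+1)+1$, $r+1=r\cdot1+1$ (equivalently $rm\equiv-1$, $(r+1)m\equiv m-1\pmod N$, and similarly one level down).

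\emph{The construction.} Index the cubes by $i\in\mathbb{Z}/N\mathbb{Z}$, placing the $i$-th with lower corner $(f(i),g(i),h(i))$. A point $(x,y,z)$ is covered iff some $i$ has $f(i)\in\{x-m+1,\dots,x\}$, $g(i)\in\{y-m+1,\dots,y\}$, $h(i)\in\{z-m+1,\dots,z\}$. Taking $f$ a bijection and reparametrising by $j=f(i)$, the covering property becomes: for every window $W$ of $m$ consecutive residues, $\bigcup_{j\in W}\big([g(j),g(j)+m)\times[h(j),h(j)+m)\big)=[\mathbb{Z}/N\mathbb{Z}]^2$. I would look for $g,h$ of the form $g(j)=aj$, $h(j)=bj$ with $a,b$ read off from the base-$r$-type expansions forced by $N=rm+1$ and $m=r(r+1)+1$; for $r=1$ this degenerates to $f=g=h=\mathrm{id}$, i.e. the $N$ ``diagonal'' cubes $(i,i,i)$, which one verifies by hand. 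Equivalently one can build the covering hierarchically: $r$ translated copies of an optimal covering of the slab $T^2\times[0,\varepsilon_0)$ cover all but a thin leftover slab, a further ``correction'' family of $m$ cubes patches the leftover, and inside the correction one repeats the construction one dimension down, invoking the optimal $2$-torus covering of size $m$ provided by $\mu(2;\varepsilon)=\lceil\tfrac1\varepsilon\lceil\tfrac1\varepsilon\rceil\rceil$ and, one further level down, the $1$-torus covering of size $r+1$.

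\emph{The main obstacle.} The entire difficulty is in the correction step, and a naive ``grid plus patch'' provably fails for $r\ge2$: after tiling the main block $[0,r\varepsilon_0)^3$ with $r^3$ cubes the uncovered set is the union of three thin coordinate faces glued along edges, and if all $m$ remaining cubes are placed so as to meet all three faces then their three corner-coordinates are each forced into a common interval of $m$ consecutive residues, whence the projection of these cubes onto any coordinate plane spans only $2m-1<N$ residues and cannot cover the circle — contradiction. Hence the correction cubes must be interleaved with the main-block cubes, each simultaneously repairing a ``seam'' in the current dimension and remaining compatible with the lower-dimensional correction; concretely this is exactly the requirement that the window-net identity above hold for \emph{every} window $W$. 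Verifying that identity — a bookkeeping argument on the residues $jm\bmod N$ for $j=0,\dots,m-1$, run through the relations $rm\equiv-1$ and $(r+1)m\equiv m-1\pmod N$ and the analogous relations in the recursion — is the part I expect to be the technical core of the proof, the setup above being routine.
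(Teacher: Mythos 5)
Your lower-bound part is correct and coincides with what the paper uses: it is just the general bound~\eqref{general_lower_bound}, and your computation that $\lceil 1/\varepsilon\rceil^{(3)}=r^3+r^2+r+1$ on the whole interval is right. The monotonicity reduction to the left endpoint $\varepsilon_0=\frac{r^2+r+1}{r^3+r^2+r+1}$ and the passage to covering $[\mathbb{Z}/N\mathbb{Z}]^3$ by $N=rm+1$ discrete cubes of side $m=r^2+r+1$ (via Lemma~\ref{lm2}) are also fine.

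However, the upper bound --- which is the actual content of the theorem --- is not proved. You set up the right framework (base vertices $(f(i),g(i),h(i))$ with $f$ a bijection and $g,h$ linear, and a window criterion for the induced $2$-dimensional coverings), but you never choose the coefficients $a,b$, and you explicitly defer the verification of the window identity as ``the technical core''; that core is exactly what is missing, and nothing in your write-up even guarantees that some linear choice works. The paper proves this theorem as the case $\xi=1$ of Theorem~\ref{upper_bounds} (Section~\ref{th5_proof}): it takes the shift vector $v=(1,r,r^2+r+1)$, i.e.\ $a=r$, $b=r^2+r+1$ in your notation, so the cubes have base vertices $(i,ri,(r^2+r+1)i)$, and by translation invariance it suffices to check a single window, namely that the $s$ squares with base vertices $(ri,(r^2+r+1)i)$, $i=0,\dots,s-1$, cover $[\mathbb{Z}/t\mathbb{Z}]^2$. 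That check is a short column argument: for a fixed column, consecutive squares are shifted by $r$ in the first coordinate (with one wrap-around shift $r+\xi$), which forces the column to meet at least $r+1$ consecutive squares; in the second coordinate consecutive squares are shifted by $r^2+r+1\leq s$ (wrap-around $\xi r+1$), so the covered arcs are tangent or overlapping and their union has length at least $s+(r-1)(r^2+r+1)+(\xi r+1)=t$. Your alternative ``hierarchical correction'' sketch and the heuristic that a naive grid-plus-patch fails do not substitute for this: until a concrete family of base vertices is exhibited and its covering property verified, the proposal is a plan rather than a proof.
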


Moreover, we have proved that the trivial right-neighborhoods of any number of the form $1/r$ where the upper bound is tight can be extended in the following way.

\begin{thm}
Let $r\geq 2$ be an integer. If $\varepsilon\in\left[\frac{1}{r},\frac1{r-1/(r^2-1)}
\right)$, then the upper bound is tight, i.e. $\mu(3;\varepsilon)=r^3$.
\label{right_interval}
\end{thm}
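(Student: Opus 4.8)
The plan is to treat the inequalities $\mu(3;\varepsilon)\le r^3$ and $\mu(3;\varepsilon)\ge r^3$ separately; the first is easy, the second is the substance. For the upper bound, observe that on the interval $\tfrac1r\le\varepsilon<\tfrac1{r-1/(r^2-1)}$ one has $\lceil 1/\varepsilon\rceil=r$ (since $\tfrac1{r-1/(r^2-1)}<\tfrac1{r-1}$ for $r\ge2$) and $\lceil (1/\varepsilon)\lceil 1/\varepsilon\rceil\rceil=\lceil r/\varepsilon\rceil=r^2$ (since $\tfrac1{r-1/(r^2-1)}\le\tfrac r{r^2-1}$ for $r\ge2$), so the right-hand side of~(\ref{known-bounds}) equals $r^3$. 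For the lower bound I would slice along the first coordinate: if $A_1,\dots,A_N$ cover $T^3$ and $\pi_1(A_i)\subset\mathbb R/\mathbb Z$ denotes the first-coordinate projection of $A_i$, then for every $x$ the cubes with $x\in\pi_1(A_i)$ meet the slice $\{x\}\times T^2$ in $\varepsilon$-squares covering $T^2$, so $f(x):=|\{i:x\in\pi_1(A_i)\}|\ge\mu(2;\varepsilon)=\lceil\tfrac1\varepsilon\lceil\tfrac1\varepsilon\rceil\rceil=r^2$ by the two-dimensional result of~\cite{2-dimension}. Since $\int_0^1 f(x)\,dx=N\varepsilon$, this yields $N\ge\lceil r^2/\varepsilon\rceil$, which on our interval equals $r^3-1$ --- one short of the goal.

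To recover the missing unit, suppose for contradiction that $N=r^3-1$. Writing $\varepsilon_0:=\tfrac1{r-1/(r^2-1)}=\tfrac{r^2-1}{r^3-r-1}$, a direct computation gives $(r^3-1)\varepsilon_0-r^2=\tfrac1{r^3-r-1}$, so $\int_0^1\bigl(f(x)-r^2\bigr)\,dx=(r^3-1)\varepsilon-r^2<\tfrac1{r^3-r-1}$. Hence the nonnegative integer-valued step function $f-r^2$ vanishes outside a set of very small measure: for ``most'' $x$ the slice $\{x\}\times T^2$ is covered by \emph{exactly} $\mu(2;\varepsilon)=r^2$ squares, i.e.\ by an optimal two-dimensional cover. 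The plan now is to exploit the rigidity of optimal two-dimensional covers, which I would first classify: I expect that for $\varepsilon$ in the relevant range every cover of $T^2$ by $r^2$ $\varepsilon$-squares has an essentially product-like structure --- after possibly interchanging the two coordinates, the squares split into $r$ ``columns'' sharing one coordinate-projection, each column consisting of $r$ squares whose projections onto the remaining coordinate cover the circle --- with the columns free to be independently sheared and translated. (It may be cleaner to pass first to the equivalent discrete problem on $[\mathbb Z/t\mathbb Z]^3$ of Section~\ref{integer_lattices} and, using that $\mu(3;\cdot)$ is non-increasing, to restrict to $\varepsilon$ just below $\varepsilon_0$.)

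The concluding step is to propagate this structure once around the first-coordinate circle. Tracking the active set $C(x)=\{i:x\in\pi_1(A_i)\}$, across any breakpoint at which $f$ remains equal to $r^2$ exactly one cube leaves $C$ (its projection onto the first coordinate ends there) and exactly one enters (its projection begins there); by matching the positions of the leaving and the entering cube within the column structures on the two sides of the breakpoint, one should be able to show that sustaining $r$ columns of $r$ squares each all the way around the loop forces at least $r$ distinct cubes in each ``column slot'', hence at least $r^3$ cubes altogether, contradicting $N=r^3-1$ --- provided one also checks that the rare slices with $f\ge r^2+1$ occupy too little of the circle to allow the pattern to be ``re-glued'' cheaply. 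I expect this last part to be the main obstacle: making the classification of optimal (and near-optimal) two-dimensional covers precise, handling their genuine non-uniqueness (the shears and the choice of which coordinate indexes the columns), and proving that the column pattern must persist coherently across the exceptional slices.
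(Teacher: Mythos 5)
Your upper bound and your opening moves for the lower bound are sound, and they match the paper's first steps in spirit: the paper also slices perpendicular to one coordinate, uses $\mu(2;\cdot)$ on each slice, and an averaging identity to force every slice to carry exactly $s$ squares. (It does this after reducing, via Lemma~\ref{lm2} and the observation that $\frac1r$ and $\frac{r^2-1}{r^3-r-1}$ are Farey neighbours, to the single critical fraction $\frac{s}{t}=\frac{r^2}{r^3-1}$; there the identity $\sum_\alpha f(\alpha)=st$ gives $f(\alpha)=s$ for \emph{every} layer, so the exceptional slices that plague your continuous version simply do not arise. Your parenthetical suggestion to discretize and work just below $\varepsilon_0$ is exactly the right fix.)

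The genuine gap is that everything after ``suppose $N=r^3-1$'' is asserted rather than proved, and the two assertions you lean on are precisely where the difficulty lives. First, the claimed rigidity of optimal two-dimensional covers (that $r^2$ $\varepsilon$-squares must organize into $r$ columns of $r$ squares) is unproved, and for $\varepsilon$ strictly above $1/r$ --- which is the only regime that matters here, since the hard sub-interval starts at $\frac{r^2}{r^3-1}$ --- it is at best delicate; the paper never establishes or needs such a classification. It makes do with two much weaker counting facts: at most $s^2-t(r+1)$ columns of a layer can meet $r+2$ or more squares (Claim~\ref{clm_columns}), and consecutive ``successor'' cubes must share at least $s-(s^2-t(r+1))$ coordinates in each horizontal direction (Claim~\ref{clm_intersection}). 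Second, your propagation step (``sustaining $r$ columns around the loop forces $r$ distinct cubes per column slot, hence $r^3$ cubes'') is exactly the heart of the matter and is only gestured at: cubes can migrate between slots and columns can re-align as you go around, and ruling this out requires quantitative control, not just the existence of a column pattern. The paper's replacement for this step is the arithmetic conditions \ref{4_ineq_main}--\ref{4 or 5} on $(s,t)$ together with a two-case analysis: either some cube drifts strictly less than the maximum over $r$ successive steps, in which case one exhibits a column met by only $r+1$ squares that jointly cover fewer than $t$ of its points, or the drift is exactly extremal at every step, in which case all base vertices have equal first and second coordinates and the point $(0,s,x_3)$ is never covered. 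That the conclusion genuinely depends on the specific arithmetic of $s/t$ (it fails, e.g., at the fractions of Theorem~\ref{left_interval}) is a sign that no soft ``structure must persist'' argument of the kind you sketch can close the gap without substantial additional work.
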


We have also improved the lower bound from~(\ref{known-bounds}) in some special cases.

\begin{thm}
Let $r\geq 2$ be an integer, $\xi\in[r]$ be such that
\begin{equation}
  \label{xi^2}
  \xi^2\leq\xi+(r+1)\left\lfloor\frac{\xi^2}{r+1}\right\rfloor.
\end{equation}
Let
\begin{itemize}
\item[$\bullet$] $s=r^2+r+\xi$,

\item[$\bullet$] $t=r^3+r^2+2\xi r+\left\lfloor\frac{\xi^2}{r+1}\right\rfloor$.
\end{itemize}
Assume that $t$ and $s$ are coprime.
Then $\mu\left(3;\frac{s}{t}\right) > t$, i.e. bigger than the lower bound.
\label{lower_bounds}
\end{thm}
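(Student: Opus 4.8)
The plan is to pass to the discrete torus and then rule out a cover of the minimum conceivable size. By the equivalence of the continuous and discrete problems established in Section~\ref{integer_lattices}, the bound $\mu\!\left(3;\tfrac st\right)>t$ is equivalent to the assertion that $(\mathbb Z/t\mathbb Z)^3$ cannot be covered by $t$ discrete cubes of side $s$; suppose, for contradiction, that it can, by cubes $C_i=A_i\times B_i\times D_i$ whose factors are blocks of $s$ consecutive residues. A bookkeeping computation with the hypotheses records that $\tfrac1\varepsilon=\tfrac ts$, that $\lceil t/s\rceil=r+1$, and that $t(r+1)=s^2-\rho$, where $\rho:=\xi^2-(r+1)\lfloor\xi^2/(r+1)\rfloor$ satisfies $0\le\rho\le r$ and, by~\eqref{xi^2}, $\rho\le\xi$; hence $\lceil\tfrac1\varepsilon\lceil\tfrac1\varepsilon\rceil\rceil=\lceil s-\rho/s\rceil=s$, the lower bound in~\eqref{known-bounds} equals $\lceil\tfrac1\varepsilon\cdot s\rceil=t$, and $\mu(2;s/t)=s$ (using $\mu(2;\cdot)=\lceil\tfrac1\varepsilon\lceil\tfrac1\varepsilon\rceil\rceil$ from~\cite{2-dimension}, together with Section~\ref{integer_lattices} for the discrete $2$-torus). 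So the task is precisely to refute a cover of size $t$.

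First I would slice along the third coordinate. For $c\in\mathbb Z/t\mathbb Z$ let $N_c$ be the number of $i$ with $c\in D_i$; the slice $\{x_3=c\}$ is a copy of $(\mathbb Z/t\mathbb Z)^2$ covered by these $N_c$ squares of side $s$, so $N_c\ge\mu(2;s/t)=s$. Since each $D_i$ is a block of $s$ consecutive residues, $\sum_cN_c=ts$, whence $N_c=s$ for every $c$. Letting $g_c$ be the number of cubes with $D_i=[c,c+s)$, the equalities $N_c=N_{c+1}$ give $g_c=g_{c-s}$ for all $c$; as $\gcd(s,t)=1$ the map $c\mapsto c-s$ is a single $t$-cycle, so $g_c$ is constant, hence $g_c=1$. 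Thus each of the $t$ blocks of length $s$ occurs exactly once among the $D_i$, and, by the symmetry of the three coordinates, also among the $A_i$ and among the $B_i$. Relabelling so that $A_i=[i,i+s)$, we obtain permutations $\pi,\sigma$ of $\mathbb Z/t\mathbb Z$ with $C_i=[i,i+s)\times[\pi(i),\pi(i)+s)\times[\sigma(i),\sigma(i)+s)$.

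Next I would reformulate the covering condition for this rigid family. For a point $(y,z)$ of the $(x_1,x_2)$-plane put $I(y,z)=\{\,i:\,y\in[i,i+s),\ z\in[\pi(i),\pi(i)+s)\,\}$; then $(y,z,c)$ is covered iff some $i\in I(y,z)$ has $c\in D_i$, so covering every slice forces $\bigcup_{i\in I(y,z)}D_i=\mathbb Z/t\mathbb Z$, hence $|I(y,z)|\ge\lceil t/s\rceil=r+1$ for all $(y,z)$ (and likewise for the two analogous families in the other directions). Moreover each slice-cover has the minimum size $s$, hence is a minimal cover: for every $c$ some point of the slice lies in a unique square of that slice, and tracking what must happen at this ``private'' point across neighbouring slices turns the bound into the statement that its index set $I$ is an $s$-dense subset of $\mathbb Z/t\mathbb Z$ (all cyclic gaps $\le s$) of a controlled shape. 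Now $\sum_{y,z}|I(y,z)|=ts^2=t^2(r+1)+t\rho$, so the inequalities $|I(y,z)|\ge r+1$ leave a total surplus of only $t\rho$; for $\rho=0$ every $|I(y,z)|$ is exactly $r+1$, an extremely rigid configuration, and in general the available slack is governed precisely by $\rho=\xi^2\bmod(r+1)$, which is where~\eqref{xi^2} enters.

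The final step is, I expect, the main obstacle: deriving a contradiction from this configuration together with the exact values $s=r^2+r+\xi$ and $t=rs+\tfrac{\xi s-\rho}{r+1}=\tfrac{s^2-\rho}{r+1}$. My plan is to fix a slice, use minimality to exhibit a square whose private region is as wide as the arithmetic allows, and see what the next slice must do to re-cover it when the window of $s$ consecutive indices advances by one; combined with the permutation property and the surplus budget $t\rho$, this should constrain the first and last cyclic gaps of the relevant sets $I(\cdot,\cdot)$ so severely that some gap must exceed $s$ unless $\rho>\xi$ — contradicting~\eqref{xi^2}. Carrying out this last step cleanly, i.e.\ selecting the right slice and point and coordinating the three directions simultaneously, is the delicate part of the argument.
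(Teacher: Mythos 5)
Your setup is in the right spirit and the bookkeeping is correct — the reduction to the discrete torus, the identity $s^2-t(r+1)=\rho$ with $\rho=\xi^2\bmod(r+1)\le\xi$, the deduction that every slice is covered by exactly $s$ squares and hence that each coordinate restricted to base vertices is a bijection, and the surplus count $\sum|I(y,z)|=t^2(r+1)+t\rho$. But the proof stops exactly where the work begins: the final paragraph is not an argument but a declared \emph{plan} (``I expect\ldots this should constrain\ldots carrying out this last step cleanly\ldots is the delicate part''). Nothing in the proposal actually produces a point of $(\mathbb Z/t\mathbb Z)^3$ that is left uncovered, nor a column covered by too few squares. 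As written there is no contradiction, so the theorem is not proved.

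For comparison, the paper's route through this same bottleneck is concrete and worth studying. After the normalization $x_3^0(i)=i$, it proves (Claim~\ref{clm_intersection}) a quantitative ``successor'' overlap: for every $i$ and $\gamma\in\{1,2\}$, $|x_\gamma^0(i)-x_\gamma^0(i+s)|\le s^2-t(r+1)=\rho$. This is obtained by exactly the mechanism you gesture at — in a slice the columns through $A_1$ that meet only $r+1$ squares are uncovered by $A_2,\dots,A_s$ alone, hence must meet $A_{s+1}$ — but it is stated and used as a hard bound, not a heuristic. Iterating it $r$ times gives $|x_\gamma^0(i)-x_\gamma^0(i+rs)|\le r\rho$, and then the proof splits into two cases: if this is ever strict, a column through both $A_0$ and $A_{rs}$ in the layer $\mathcal S_0$ is shown to be met by only $r+1$ squares covering $<t$ of its points (using \ref{5_ineq_main}, i.e.\ $r(s+\rho)\le t$); if it is always an equality, the triangle inequality forces every one-step shift to be the \emph{same} signed quantity $\pm\rho$ in each of the first two coordinates, whence (after the $\pm$ normalization) $x_1^0(i)=x_2^0(i)$ for all $i$ and the point $(0,s,*)$ is uncovered. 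Your proposal is compatible with this but omits both Claim~\ref{clm_intersection} (the quantitative overlap) and the strict/non-strict dichotomy, which is precisely where conditions \ref{4_ineq_main}--\ref{4 or 5} (equivalently your $\rho\le\xi$) get used. Until that step is supplied, the proof is incomplete.
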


Condition
~\eqref{xi^2} implies that either $\xi\geq\sqrt{r+1}$  or $\xi = 1$. In the interval $[1/3,1/2)$, there are two such
values of $\frac st$, namely $\frac{s}{t}\in\left\{\frac{7}{16},\frac{8}{21}\right\}$.

Finally, we have improved the upper bound from~(\ref{known-bounds}) in some special cases.

\begin{thm}
Let $r\geq 2$ be an integer, $\xi\in[r]$. Let
\begin{itemize}
    \item[$\bullet$] $s=r^2+r+\xi$,
    \item[$\bullet$] $t=r^3+r^2+\xi(r+1)$.
\end{itemize}
Then $\mu\left(3;\frac{s}{t}\right) \leq t$, i.e. smaller than the upper bound.
\label{upper_bounds}
\end{thm}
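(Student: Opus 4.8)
The plan is to exhibit, for these $s$ and $t$, an explicit covering of $T^3$ by exactly $t$ cubes of side $\varepsilon=\frac st$. The useful identities are $t=(r+1)(r^2+\xi)$ and $s=(r^2+\xi)+r$, whence $\lceil 1/\varepsilon\rceil=r+1$ and $\left\lceil\frac1\varepsilon\left\lceil\frac1\varepsilon\right\rceil\right\rceil=r^2+r+1$, so that the target number satisfies $(r^2+r+1)(r+1)=t+(r+1)(r+1-\xi)$ with $(r+1)(r+1-\xi)\geq r+1>0$ because $\xi\leq r$; hence $t$ lies strictly below the upper bound in~(\ref{known-bounds}), and the whole content is to produce the covering.

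First I would reduce to a two‑dimensional statement. Put the cubes in $t$ horizontal layers at heights $0,\frac1t,\dots,\frac{t-1}t$: for $k=0,1,\dots,t-1$ take $\bigl(c_k+[0,\varepsilon]^2\bigr)\times\bigl[\frac kt,\frac kt+\varepsilon\bigr]$, where the points $c_0,\dots,c_{t-1}\in T^2$ are to be chosen and are extended $t$‑periodically in $k$. Since $\varepsilon=\frac st$, a point $(x,y,z)$ with $z$ in an open grid interval $\bigl(\frac{j-1}t,\frac jt\bigr)$ lies in the $z$‑slab of cube $k$ precisely when $k\in\{j-s,\dots,j-1\}\pmod t$, so it is covered if and only if $(x,y)\in\{c_{j-s},\dots,c_{j-1}\}+[0,\varepsilon]^2$ (the finitely many grid values of $z$ being covered too, as there the index set only enlarges). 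Thus the $t$ cubes cover $T^3$ as soon as every cyclic window of $s$ consecutive points $\{c_j,\dots,c_{j+s-1}\}$ is $\varepsilon$‑dense in $T^2$, i.e. its $\varepsilon$‑squares cover $T^2$. Note that $\sum_k\varepsilon=t\varepsilon=s=(r^2+r+1)+(\xi-1)$, so a typical layer carries $s$ squares while $r^2+r+1=\mu(2;\varepsilon)$ are needed: the two‑dimensional slack per window is only $\xi-1$.

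The main task is to build such points. The natural attempt is an arithmetic progression $c_k=kv$ with $v=(p,q)/t$ for suitable integers $p,q$; then every cyclic $s$‑window is a translate (and central reflection) of $S_v+[0,\varepsilon]^2$ with $S_v=\{0,v,2v,\dots,(s-1)v\}$, so it suffices that $S_v$ be $\varepsilon$‑dense. Equivalently, in the discrete picture of Section~\ref{integer_lattices}, one needs $\bigcup_{k=0}^{s-1}\bigl((kp,kq)+\{0,1,\dots,s-1\}^2\bigr)=(\mathbb Z/t\mathbb Z)^2$; observe that this condition never mentions $\gcd(s,t)$, in agreement with the hypotheses. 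The factorization suggests taking $p$ a small multiple of $r+1$ of size $\approx s$ (e.g.\ $p=r(r+1)$, so that the first coordinates become $(r+1)$ times $kr\bmod(r^2+\xi)$) together with $q$ of size $\approx r^2$; the key arithmetic input is $rs=t-\xi$, which makes the first coordinates wind about $r$ times around $T$ during the $s$ steps and, by a three‑distance‑theorem type analysis, land in $r+1$ arithmetic progressions of common difference $r+1$, covering $T$ with room to spare. I expect the genuinely two‑dimensional part to be the principal obstacle: for each prescribed first coordinate one must check that among the $\approx r$ or $r+1$ admissible indices $k$ at least one also has its second coordinate $kq\bmod t$ in the prescribed window. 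Since the slack per window is only of order $\xi$, this is tight — in the worst cases the relevant gaps equal $s$ exactly — so the choice of $q$, the modification forced when $\gcd(r,\xi)>1$, and the bookkeeping of the ``$+\xi$'' correction must be carried out carefully, presumably with a small case split according to $\xi$ and $\gcd(r,\xi)$; and it is conceivable that for some $(r,\xi)$ a single progression does not suffice and one must interleave two of them, which the reduction of the previous paragraph still permits.

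Once a valid family $(c_k)$ is in hand, the reduction gives a covering of $T^3$ by $t$ cubes of side $\frac st$, i.e.\ $\mu\bigl(3;\frac st\bigr)\leq t$, which by the first paragraph is smaller than the upper bound of~(\ref{known-bounds}); this proves the theorem.
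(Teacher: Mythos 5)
Your reduction to a two-dimensional statement is correct and is essentially the same reduction the paper uses: the paper shifts each successive cube by the fixed vector $v=(1,r,r^2+r+1)$ and slices along the first coordinate, so its 2D claim is exactly your ``every cyclic $s$-window of base points covers $T^2$'' condition with $c_k=k\,(r,\,r^2+r+1)/t$. The gap is that the heart of the theorem is precisely the verification of that 2D covering, and you do not carry it out. You only describe a ``natural attempt'' $c_k=k(p,q)/t$ with $p=r(r+1)$ and $q\approx r^2$, defer the choice of $q$, foresee a case split on $\gcd(r,\xi)$, and explicitly allow that a single arithmetic progression might fail and two interleaved ones might be needed; no specific $(p,q)$ is exhibited and no covering is proved. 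Note also that your tentative $p=r(r+1)$ is not coprime to $t=(r+1)(r^2+\xi)$, so the first coordinates $kp\bmod t$ only run over multiples of $(r+1)\gcd(r,\xi)$, which makes the claimed ``covering $T$ with room to spare'' in the first coordinate a delicate point rather than an automatic consequence of a three-distance argument; this is exactly the kind of issue that a completed proof must settle.

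For comparison, the paper's verification is short once the right vector is fixed. With the 2D shift $(r,\,r^2+r+1)$, the consecutive first-coordinate offsets of the $s$ squares are $r$, except for a single wraparound offset $t-r(s-1)=r+\xi<s$; hence any column of the 2-torus meets $p+1$ consecutive squares with $s+1\leq (p+1)r+(r+\xi)$, forcing $p\geq r$, i.e.\ at least $r+1$ consecutive squares. The consecutive second-coordinate offsets are $r^2+r+1\leq s$, except for a single wraparound offset $\xi r+1\leq s$, so the length-$s$ intervals these $r+1$ squares cut out of the column overlap or are tangent, and together they cover at least $s+(r-1)(r^2+r+1)+(\xi r+1)=t$ points, i.e.\ the whole column. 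Supplying such an explicit shift vector together with this (or an equivalent) computation is what is missing from your argument; without it the proposal is a plan, not a proof.
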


Notice that Theorem~\ref{left_interval}
follows from Theorem~\ref{upper_bounds} for $\xi=1$. In Section~\ref{th5_proof}, we give a complete proof of Theorem~\ref{upper_bounds} for all values of $\xi$.

Both Theorem~\ref{lower_bounds} and Theorem~\ref{upper_bounds} imply improvements of the bounds in~(\ref{known-bounds}) for values of $\varepsilon$ in certain right-interval of the respective $s/t$ (see Lemma~\ref{lm1} from Section~\ref{integer_lattices}).

The results above allow to find the values of $\mu(3;\varepsilon)$ for all $\varepsilon\in[1/2,1)$ as follows.

\begin{thm}
We have
$$
  \mu(3;\varepsilon)=\begin{cases}
    4, & \varepsilon\in[3/4,1);\\
    5, & \varepsilon\in[2/3,3/4);\\
    7, & \varepsilon\in[3/5,2/3);\\
    8, & \varepsilon\in[1/2,3/5).
  \end{cases}
$$
\label{first_interval}
\end{thm}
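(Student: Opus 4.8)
\emph{Proof plan.} The function $\varepsilon\mapsto\mu(3;\varepsilon)$ is non-increasing, since enlarging each cube turns a covering by $\varepsilon$-cubes into one by $\varepsilon'$-cubes for $\varepsilon'\ge\varepsilon$. Hence for each of the four stated intervals it suffices to check that the claimed number is a lower bound for $\mu(3;\varepsilon)$ throughout the interval and to exhibit one covering of $T^3$ by that many cubes whose side equals the left endpoint. For the lower bounds, a direct computation of $\lceil1/\varepsilon\rceil^{(3)}$ shows that the bound~\eqref{known-bounds} equals $4$ on $[3/4,1)$, equals $5$ on $[2/3,3/4)$, and equals $7$ on $[3/5,2/3)$, staying constant there because $1/\varepsilon$ remains in a window on which all three nested ceilings are constant. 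On $[1/2,3/5)$ the bound~\eqref{known-bounds} drops to $7$ for $\varepsilon\in[4/7,3/5)$, so there I instead use Theorem~\ref{right_interval} with $r=2$: it gives $\mu(3;\varepsilon)=8$ for $\varepsilon\in\bigl[\tfrac12,\tfrac1{2-1/3}\bigr)=[1/2,3/5)$ and settles that interval in one stroke.

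For the upper bounds, $[3/4,1)$ is immediate from Theorem~\ref{upper_bounds} with $r=\xi=1$ (equivalently Theorem~\ref{left_interval} with $r=1$), which gives $\mu(3;3/4)\le 4$. The remaining two cases require explicit constructions. For $\varepsilon=2/3$ one checks that the five cubes of side $2/3$ with base corners
\[
(0,0,0),\qquad \tfrac13(1,1,1),\qquad \tfrac13(1,2,2),\qquad \tfrac13(2,1,2),\qquad \tfrac13(2,2,1)
\]
cover $T^3$: classifying each coordinate of a point by which of $[0,\tfrac13),\,[\tfrac13,\tfrac23),\,[\tfrac23,1)$ it lies in, the first cube covers every point all of whose coordinates lie in $[0,\tfrac23]$, and a short case analysis over the remaining $19$ coordinate-patterns shows the other four cubes cover every point with a coordinate in $(\tfrac23,1)$. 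The harder construction is $\mu(3;3/5)\le 7$. Here I would pass to the discrete model (covering $T^3$ by cubes of side $3/5$ is equivalent, via Section~\ref{integer_lattices}, to covering $[\mathbb Z/5\mathbb Z]^3$ by cubes of side $3$) and build the seven cubes level by level in the third coordinate: assign the five $3$-term arcs of $\mathbb Z/5\mathbb Z$ as the third-coordinate projections of the cubes with multiplicities $(2,1,2,1,1)$, so that for every value $z$ of the third coordinate exactly four or five cubes are ``active''; then choose the first two coordinates of the cubes so that at each $z$ the active cubes project onto a covering of $[\mathbb Z/5\mathbb Z]^2$ by (at least) four squares of side $3$ — optimal there, since $\mu(2;3/5)=4$ — which recurses once more on the second coordinate.

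The main obstacle is precisely this $7$-cube construction. The naive alternative — split $T^3$ into two $\tfrac35$-thick slabs and distribute the cubes — fails, because any slab that sees a level used by no other slab needs its cubes to project onto a full covering of $T^2$, forcing at least $\mu(2;3/5)=4$ cubes per slab and hence at least $8$ cubes in all; the uneven multiplicities $(2,1,2,1,1)$ are exactly the device that reduces this to $7$, at the cost of making four different $4$-element subsets of the seven cubes each have to project to a covering of $[\mathbb Z/5\mathbb Z]^2$, all of which must be realized simultaneously by a careful choice of the planar projections (and recursively again in one dimension lower). Once both coverings are in hand, combining them with the lower bounds above, with Theorem~\ref{right_interval} on $[1/2,3/5)$, with Theorem~\ref{upper_bounds} on $[3/4,1)$, and with monotonicity to fill in each interval, yields the stated piecewise-constant value of $\mu(3;\varepsilon)$ on all of $[1/2,1)$.
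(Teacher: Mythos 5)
Your overall architecture is the same as the paper's: lower bounds from \eqref{known-bounds} on $[3/4,1)\cup[2/3,3/4)\cup[3/5,2/3)$ and from Theorem~\ref{right_interval} (with $r=2$) on $[1/2,3/5)$, upper bounds from explicit coverings at the left endpoints $3/4$, $2/3$, $3/5$, and monotonicity of $\mu(3;\cdot)$ to propagate along each interval. Your $2/3$ covering is literally the paper's example (base vertices $(0,0,0),(1,1,1),(1,2,2),(2,1,2),(2,2,1)$ in the discrete model $\mu_0(3;2,3)$), and the $3/4$ case is fine (though note Theorem~\ref{upper_bounds} formally assumes $r\ge 2$, so you should cite Theorem~\ref{left_interval} with $r=1$, or just use the diagonal covering $(i,i,i)$, $i\in\mathbb Z/4\mathbb Z$, as the paper does).

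The genuine gap is the case $\varepsilon=3/5$: you never actually produce a covering of $[\mathbb Z/5\mathbb Z]^3$ by seven cubes of side $3$. What you give is a counting frame (third-coordinate multiplicities summing to $7$ so that one layer meets $5$ squares and four layers meet exactly $4$) together with the requirement that several different $4$-element subsets of the seven cubes simultaneously project to optimal coverings of $[\mathbb Z/5\mathbb Z]^2$ --- but you explicitly defer the ``careful choice of the planar projections'' that makes all these constraints compatible, and it is exactly this compatibility that is the nontrivial content of the bound $\mu(3;3/5)\le 7$ (your own counting shows why the naive slab construction forces $8$). Since the value $7$ on $[3/5,2/3)$ is precisely the part of the theorem that does not follow from the bounds \eqref{known-bounds} alone, the proof is incomplete without it. The paper closes this by simply exhibiting the base vertices $(0,0,0),\,(1,1,3),\,(1,3,1),\,(2,4,4),\,(3,0,2),\,(3,2,0),\,(4,3,3)$ in $[\mathbb Z/5\mathbb Z]^3$ (a finite check then confirms they cover); to complete your argument you would need either such an explicit list or a proof that your layer-by-layer scheme can indeed be realized.
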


Notice that, in contrast to $d=2$, when $\varepsilon\in[1/2,1)$, the value of $\mu(d=3;\varepsilon)$ achieves the lower bound in~(\ref{known-bounds}) if and only if
$\varepsilon\in[1/2,4/7)\cup[3/5,1)$.

\smallskip
For $\varepsilon\in[1/3,1/2)$, Theorems~\ref{left_interval},~\ref{right_interval},~\ref{lower_bounds},~\ref{upper_bounds} and Lemma~\ref{lm1} from Section~\ref{integer_lattices} imply that
\begin{itemize}
\item for $\varepsilon \in \left[\frac{1}{3}, \frac{8}{23}\right)$, $\mu(3;\varepsilon)=27$ (by Theorem~\ref{right_interval});
\item for $\varepsilon \in \left[\frac{8}{21},\frac{5}{13}\right)$,
  $\mu(3;\varepsilon)$ is constant and lies on $[22,24]$ (by Theorem~\ref{lower_bounds}, Lemma~\ref{lm1}, and \eqref{known-bounds});
\item for $\varepsilon \in \left[\frac{7}{16},\frac{4}{9}\right)$,
  $\mu(3;\varepsilon)$ is constant and lies on $[17,21]$ (by Theorem~\ref{lower_bounds}, Lemma~\ref{lm1}, and \eqref{known-bounds});
\item for $\varepsilon \in \left[\frac{4}{9},\frac{7}{15}\right)$, $\mu(3;\varepsilon)\in [16,18]$ (by Theorem~\ref{upper_bounds} and \eqref{known-bounds}
    );
\item for $\varepsilon \in \left[\frac{7}{15}, \frac{1}{2}\right)$, $\mu(3;\varepsilon) = 15$ (by Theorem~\ref{left_interval} and Lemma~\ref{lm1}).
\end{itemize}

\section{Integer lattices}
\label{integer_lattices}

On one hand, there are continuously many $\varepsilon$ left for which the answer is not known. On the other hand, the lemma below implies that the problem reduces to a countable set.

Let $d\geq 2$ be an integer.

\begin{lm}
There exists an infinite sequence of rational numbers
$1>\frac{s_1}{t_1}>\frac{s_2}{t_2}>\ldots >0$ such that, for every $i\in\mathbb{N}$, $t_i\leq\mu(d;s_i/t_i)$ and $\mu(d;\varepsilon)=\mu(d;s_i/t_i)$ for all $\varepsilon\in[s_i/t_i,s_{i-1}/t_{i-1})$, where $s_0=t_0=1$.
\label{lm1}
\end{lm}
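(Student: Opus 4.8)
The plan is to establish a monotonicity-type statement for $\mu(d;\varepsilon)$ together with a discretization, and then extract the desired sequence by induction. First I would prove two auxiliary facts. (i) \emph{Monotonicity:} if $\varepsilon\le\varepsilon'$ then $\mu(d;\varepsilon)\ge\mu(d;\varepsilon')$, since any covering of $T^d$ by cubes of side $\varepsilon'$ gives a covering by cubes of side $\varepsilon$ after concentrically shrinking — wait, shrinking a cube of side $\varepsilon'$ to side $\varepsilon$ makes it smaller, so that direction needs care; the correct observation is that a cube of side $\varepsilon'$ \emph{contains} a cube of side $\varepsilon$, hence to \emph{cover} with the smaller cubes we may need more of them, giving $\mu(d;\varepsilon)\ge\mu(d;\varepsilon')$. (ii) \emph{Left-continuity / rational attainment:} for each fixed $\varepsilon_0$, the function $\varepsilon\mapsto\mu(d;\varepsilon)$ is constant on a maximal half-open interval $[\varepsilon_0',\varepsilon_0)$ of the form given, and the infimum of each such interval is rational and is itself a value where $\mu$ is attained. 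The key tool here is the reduction to integer lattices announced in Section~\ref{integer_lattices}: an optimal covering of $T^d$ by cubes of side $\varepsilon$ can be perturbed so that all coordinates of all corners lie in $\frac1t\mathbb Z$ for a suitable common denominator $t$, which shows that the set of values $\mu$ takes, and the breakpoints where it jumps, form a discrete set indexed by rationals $s/t$.

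Next I would show that at each breakpoint the rational value $s/t$ satisfies $t\le\mu(d;s/t)$. This is exactly inequality~\eqref{general_lower_bound} specialized: when $\varepsilon=s/t$ with $s,t$ chosen so that $s/t$ is a left-endpoint of a constancy interval, we have $\lceil 1/\varepsilon\rceil^{(d)}$ growing, and more directly, the discrete reformulation covers $[\mathbb Z/t\mathbb Z]^d$ (which has $t^d$ points) by cubes each of which, restricted to the lattice, covers at most... hmm, this needs the sharper combinatorial input. Actually the cleanest route is: by the equivalence of the discrete and continuous problems (Section~\ref{integer_lattices}), $\mu(d;s/t)$ equals the minimum number of discrete $s$-cubes needed to cover $(\mathbb Z/t\mathbb Z)^d$; projecting onto any one coordinate, each cube covers an interval of $s$ residues out of $t$, and since $s<t$ one needs at least $\lceil t/s\rceil\ge 2$ cubes along that axis, and iterating the projection argument across all $d$ coordinates yields $\mu\ge\lceil t/s\rceil^{(d)}\ge t$ whenever $s/t$ lies in the appropriate range — more carefully, one picks the breakpoints $s_i/t_i$ precisely so that this iterated-ceiling bound gives at least $t_i$, which is arranged because consecutive breakpoints are where $\lceil 1/\varepsilon\rceil^{(j)}$ changes for some $j$.

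Then the sequence is built greedily by downward induction: set $s_0/t_0=1$; having defined $s_{i-1}/t_{i-1}$, let $s_i/t_i$ be the largest rational strictly below $s_{i-1}/t_{i-1}$ at which $\mu(d;\cdot)$ takes a new (strictly larger, by monotonicity (i)) value — this exists and is rational by fact (ii) — and let $t_i$ be its denominator in lowest terms, or more robustly, a denominator for which the lattice reduction certifies $t_i\le\mu(d;s_i/t_i)$. By construction $\mu(d;\varepsilon)=\mu(d;s_i/t_i)$ for all $\varepsilon\in[s_i/t_i,s_{i-1}/t_{i-1})$, the ratios are strictly decreasing, and they tend to $0$ (any $\varepsilon>0$ lies in finitely many such intervals since $\mu(d;\varepsilon)<\infty$ while $\mu$ is unbounded as $\varepsilon\to0$ by~\eqref{general_lower_bound}). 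The main obstacle is fact (ii): showing rigorously that the breakpoints are rational with a controlled denominator $t_i$ satisfying $t_i\le\mu(d;s_i/t_i)$, which is where the discretization of Section~\ref{integer_lattices} must be invoked in full, and where one must be careful that the "natural" denominator arising from an optimal lattice covering is the same as the one appearing in the iterated-ceiling lower bound. I would isolate this as the heart of the argument and handle the rest as bookkeeping.
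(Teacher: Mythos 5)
Your overall structure — monotonicity of $\mu(d;\cdot)$ in $\varepsilon$, discretization via a lattice, and inductive extraction of breakpoints — has the right shape, but the central technical step is missing, and the substitute you gesture at is incorrect.

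The gap is in how you argue $t_i\le\mu(d;s_i/t_i)$. You propose to get this from the iterated-ceiling lower bound \eqref{general_lower_bound}, asserting that ``iterating the projection argument yields $\mu\ge\lceil t/s\rceil^{(d)}\ge t$'' and that ``one picks the breakpoints so that this bound gives at least $t_i$.'' Neither half works. First, $\lceil t/s\rceil^{(d)}\ge t$ is simply false for general rationals: for $d=3$ and $s/t=100/201$ one computes $\lceil \frac{201}{100}\lceil \frac{201}{100}\lceil \frac{201}{100}\rceil\rceil\rceil=15$, far below $201$. Second, you do not get to ``pick'' the breakpoints: they are determined by the function $\mu(d;\cdot)$, and the paper (Theorem~\ref{lower_bounds}) shows that the iterated-ceiling bound is \emph{not} tight at all breakpoints, so breakpoints need not coincide with jumps of $\lceil 1/\varepsilon\rceil^{(j)}$. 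This route therefore cannot supply the denominator bound.

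What is actually required — and what the paper proves as Claim~\ref{clm1} — is a perturbation statement: for any fixed $\mu$, the minimal $\varepsilon_0$ admitting a covering of $T^d$ by $\mu$ cubes of side $\varepsilon_0$ (which exists by compactness) is rational, $\varepsilon_0=s_0/t_0$ with $t_0\le\mu$. The paper's argument has two steps: first, by shifting connected components of the ``shared-endpoint'' graph in each coordinate direction, one normalizes the covering so that for every axis the $\mu$ projected segments form a connected chain; second, one observes that in such a normalized covering all base coordinates lie among the $\mu+1$ residues $\{k\varepsilon\mod 1:k=0,\dots,\mu\}$, and if $\varepsilon$ is not a rational with denominator at most $\mu$, one may shrink $\varepsilon$ slightly without changing the cyclic order of those residues, yielding a covering with strictly smaller side length — contradicting minimality. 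Both the statement $t_i\le\mu(d;s_i/t_i)$ and the rationality of the breakpoints come out of this one argument; you correctly flagged that ``the discretization must be invoked in full'' and that this is the heart, but you did not supply it, and the ceiling-based shortcut does not fill the hole.
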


For $d=3$, due to~(\ref{known-bounds}), the denominator of the {\it critical point} $\frac{s_i}{t_i}$ is at most $\lceil \frac{t_i}{s_i} \rceil\lceil \frac{t_i}{s_i}\lceil \frac{t_i}{s_i} \rceil \rceil\leq \lceil \frac{t_i}{s_i}\rceil^3
$. Therefore, for every integer $r\geq 2$, on $\left[\frac{1}{r},\frac{1}{r-1}\right)$ there are at most $\frac{r^2(r^3+1)}{2(r-1)}+r^3$ candidates on the role of a critical point.

We give the proof of Lemma~\ref{lm1} in Section~\ref{integers}.
The proof also yields that the problem can be equivalently reformulated for integer lattices (as stated below in Lemma~\ref{lm2}).

Let $s\leq t$ be positive integers. Consider the torus $[\mathbb{Z}/t\mathbb{Z}]^d$ and the set of its `sub-cubes' with edges of size $s$: $\{(x_1,\ldots,x_d):\,x_i^0\leq x_i\leq x_i^0+s-1\mod t\}$, $x_i^0\in \mathbb{Z}/t\mathbb{Z}$. Throughout the paper, for a `sub-cube' $\{(x_1,\ldots,x_d):\,x_i^0\leq x_i\leq x_i^0+s-1\mod t\}$, we call its node $(x_1^0,\dots,x_d^0)$ {\it the base vertex of the cube}. Let $\mu_0(d;s,t)$ be the minimum number of such `sub-cubes' needed to cover the torus.

\begin{lm}
Let $r\geq 2$ be an integer, $\varepsilon\in\left[\frac{1}{r},\frac{1}{r-1}\right)$. Let $\frac{s}{t}\leq\varepsilon$ be the (leftwards)
closest rational number to $\varepsilon$ with $t\leq r^d$. Then $\mu(d;\varepsilon)=\mu_0(d;s,t)$.
Moreover, for any rational $\varepsilon=s/t$, we have $\mu(d;\varepsilon)=\mu_0(d;s,t)$.
\label{lm2}
\end{lm}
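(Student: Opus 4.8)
The plan is to establish the two assertions of the lemma in turn: the second one (the ``moreover'' part, valid for \emph{every} rational $s/t$) will come from an explicit dictionary between continuous coverings of $T^d$ and discrete coverings of $[\mathbb Z/t\mathbb Z]^d$, and the first one will then be reduced to it by means of Lemma~\ref{lm1}. The first ingredient is the elementary monotonicity of $\mu(d;\cdot)$: keeping every base vertex fixed and enlarging every cube turns a covering into a covering by cubes of any larger side $<1$, so $\mu(d;\varepsilon')\le\mu(d;\varepsilon)$ whenever $\varepsilon\le\varepsilon'<1$; also $\mu(d;1/r)\le r^d$, by taking the product of $d$ coverings of the circle by $r$ arcs of length $\ge 1/r$. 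Since $s/t\le\varepsilon$, monotonicity already gives $\mu(d;\varepsilon)\le\mu(d;s/t)$. For the reverse inequality I would invoke Lemma~\ref{lm1}: pick the index $k$ with $\varepsilon\in[s_k/t_k,\,s_{k-1}/t_{k-1})$, so that $\mu(d;\cdot)$ is constant on this interval. Because $s_k/t_k\le\varepsilon$ and $\varepsilon\ge 1/r$, we get $t_k\le\mu(d;s_k/t_k)=\mu(d;\varepsilon)\le\mu(d;1/r)\le r^d$; hence $s_k/t_k$ is a rational number $\le\varepsilon$ admitting a denominator at most $r^d$, and the defining maximality of $s/t$ forces $s_k/t_k\le s/t\le\varepsilon<s_{k-1}/t_{k-1}$. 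Thus $s/t$ lies in the same constancy interval as $\varepsilon$, so $\mu(d;\varepsilon)=\mu(d;s/t)$, and everything reduces to identifying $\mu(d;s/t)$ with $\mu_0(d;s,t)$ for an arbitrary rational $s/t\le 1$.

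For $\mu(d;s/t)\le\mu_0(d;s,t)$, embed $\mathbb Z/t\mathbb Z$ into $\mathbb R/\mathbb Z$ as $\{0,\tfrac1t,\dots,\tfrac{t-1}t\}$ and send the discrete cube with base vertex $(v_1,\dots,v_d)$ to the continuous cube $\prod_i[v_i/t,\,v_i/t+s/t]$ of side $s/t$. Given $x\in T^d$ with representatives $x_i\in[0,1)$, the point $(\lfloor tx_i\rfloor)_i$ of $[\mathbb Z/t\mathbb Z]^d$ is covered by some discrete cube $j$; going forward around the $i$-th circle from $v_{j,i}/t$ one reaches $\lfloor tx_i\rfloor/t$ within $(s-1)/t$ and $(\lfloor tx_i\rfloor+1)/t$ within $s/t$, so the half-open cell $\prod_i[\lfloor tx_i\rfloor/t,(\lfloor tx_i\rfloor+1)/t)$ (which contains $x$) lies inside the $j$-th continuous cube; hence the $\mu_0(d;s,t)$ continuous cubes cover $T^d$. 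For the reverse inequality $\mu(d;s/t)\ge\mu_0(d;s,t)$, take a covering of $T^d$ by $N:=\mu(d;s/t)$ cubes of side $s/t$ with base vertices $(a_{j,i})$ and translate the whole family by a vector $(-\eta_1,\dots,-\eta_d)$; this is again a covering, since translating all cubes by one common vector translates their union, which is all of $T^d$. Choosing each $\eta_i$ outside the finite set $\bigcup_j(a_{j,i}-\tfrac1t\mathbb Z)$ makes every translated base coordinate a non-multiple of $1/t$, and since $t\cdot(s/t)=s\in\mathbb Z$ each translated interval $[a_{j,i}-\eta_i,\,a_{j,i}-\eta_i+s/t]$ then contains exactly $s$ points of $\tfrac1t\mathbb Z$. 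Let $\tilde I_{j,i}\subseteq\mathbb Z/t\mathbb Z$ be the corresponding $s$-element discrete interval and $\tilde A_j:=\prod_i\tilde I_{j,i}$. For any $v\in[\mathbb Z/t\mathbb Z]^d$ the point $(v_i/t)_i$ lies in some translated cube $j$, so $v_i/t$ is one of those $s$ lattice points and $v_i\in\tilde I_{j,i}$; hence $v\in\tilde A_j$, and $\tilde A_1,\dots,\tilde A_N$ cover $[\mathbb Z/t\mathbb Z]^d$. Combining the two inequalities, and the previous paragraph, gives $\mu(d;\varepsilon)=\mu(d;s/t)=\mu_0(d;s,t)$.

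The step I expect to be the real obstacle is the inequality $\mu(d;s/t)\ge\mu_0(d;s,t)$: naively rounding each continuous base vertex to the lattice $\tfrac1t\mathbb Z$ can uncover points on the far faces of the cubes, and this genuinely happens whenever a base coordinate already lies on the lattice, since then the scaled cube captures $s+1$ lattice points and there is an honest ambiguity about which $s$ of them to keep. The generic translation is precisely the device that removes this ambiguity uniformly across all cubes and coordinates. Beyond that, what remains is the routine bookkeeping of wrap-around arithmetic on the torus in both directions of the correspondence, together with the two easy facts about $\mu(d;\cdot)$ quoted above; I do not anticipate further difficulty there.
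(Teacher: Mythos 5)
Your proof is correct, and it differs from the paper's in its handling of the crux inequality $\mu(d;s/t)\ge\mu_0(d;s,t)$. The paper obtains this from the second part of Claim~\ref{clm1}: Step~1 of that claim's proof shifts sub-cubes so that, for each coordinate $i$, the endpoint-coincidence graph $G_i$ becomes connected, after which normalizing one base vertex to the origin forces every base coordinate of every cube to be a multiple of $1/t$ when $\varepsilon=s/t$, and the discrete covering is read off directly. You instead leave the covering alone and perturb it by one generic translation chosen so that \emph{no} base coordinate lands on the lattice $\frac1t\mathbb Z$; then each edge of each cube captures exactly $s$ lattice points, so the rounding is unambiguous. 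That is a somewhat more elementary device and decouples the ``moreover'' statement from the connectivity machinery --- but since you also invoke Lemma~\ref{lm1} (whose proof in the paper rests on the very same Step~1), there is no net saving. Your treatment of the first assertion --- the monotonicity $\mu(d;\varepsilon')\le\mu(d;\varepsilon)$ for $\varepsilon\le\varepsilon'$, the crude bound $\mu(d;1/r)\le r^d$, and the observation that the left endpoint $s_k/t_k$ of the constancy interval containing $\varepsilon$ has denominator $\le r^d$ and hence lies at or below $s/t$ --- matches the paper's derivation in substance; the paper phrases it through the minimal $\varepsilon_0=s_0/t_0$ furnished by Claim~\ref{clm1} rather than through Lemma~\ref{lm1}'s interval endpoints, but the inequalities chained are the same.
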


Since $\mu(d;s/t)=\mu_0(d;s,t)$, we get that $\mu_0(d;s,t)$ depends only on $s/t$.


\section{Proofs of Lemmas~\ref{lm1} and~\ref{lm2}}
\label{integers}

In this section, we prove the following.

\begin{clm}
  \label{clm1}
  Let $\mu$ be a positive integer. Choose a minimal $\varepsilon_0$ such that there is a covering $\mathcal{A}$ of the torus $T^d$ by $\mu$ `$\varepsilon_0$-sub-cubes'
  $$
    A_j:=\left\{(x_1,\ldots,x_d):\,x_i\in\left[x_i^j,x_i^j+\varepsilon_0\right]\right\},\quad j\in[\mu].
  $$
  Then $\varepsilon_0=s_0/t_0$ where $s_0$ and $t_0$ are coprime integers, with $t_0\leq\mu$.

  Moreover, for every rational $\varepsilon=s/t\geq\varepsilon_0$, there exists a covering of $T^d$ with $\mu$ `$\varepsilon$-sub-cubes' such that all their base vertices are multiples of $1/t$.
\end{clm}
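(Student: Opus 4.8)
\emph{Plan of the proof.}
The plan is to derive the denominator bound from the structure of a vertex of a polyhedron with $\{0,\pm1\}$-data, and the discretization from rounding base vertices down to the grid. First, a minimal $\varepsilon_0$ exists: the set of $\varepsilon$ admitting a covering of $T^d$ by $\mu$ closed $\varepsilon$-sub-cubes is bounded below (the cubes have total volume $\mu\varepsilon^d\ge1$) and closed — given such coverings for $\varepsilon_n\to\varepsilon^*$, pass to a subsequence along which the base-vertex vectors converge in the compact space $(T^d)^\mu$, and use the closedness of the cubes, together with the fact that a given point lies in a fixed cube for infinitely many $n$, to pass coverage to the limit. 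Fix an optimal covering $\mathcal A=\{A_j\}$ with $A_j=\prod_i[x_i^j,x_i^j+\varepsilon_0]$; if $\varepsilon_0=1$ then $t_0=1$ and we are done, so assume $\varepsilon_0<1$.

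For the denominator bound I would use the duality: a family of closed sub-cubes covers $T^d$ if and only if for every $f\colon[\mu]\to[d]$ there is a coordinate $i$ with $\bigcup_{j:\,f(j)=i}[x_i^j,x_i^j+\varepsilon]=\mathbb R/\mathbb Z$. Fixing for each $f$ a coordinate $i(f)$ witnessing this for $\mathcal A$, let $P$ be the polyhedron in the variables $(x_i^j)$ and $\varepsilon$ cut out by: (a) in every coordinate the $\mu$ left-faces keep the (weak) cyclic order they have in $\mathcal A$; (b) for every $f$, each gap between two $\mathcal A$-consecutive left-faces among $\{x_{i(f)}^j:f(j)=i(f)\}$ is at most $\varepsilon$; and (c) $\varepsilon\ge0$. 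Every such inequality has $x$-part one $+1$ and one $-1$ inside a single coordinate, $\varepsilon$-coefficient in $\{0,\pm1\}$, and constant in $\{0,\pm1\}$; by (a) and the duality $P$ consists of coverings, $\mathcal A\in P$, and hence $\min_P\varepsilon=\varepsilon_0$. Normalizing $x_i^{j_0}=0$ for one cube $j_0$ (legitimate by translation-invariance) makes $P$ pointed, because the cyclic-order constraints in (a) connect all cube indices; so $\varepsilon_0$ is attained at a vertex, where $N:=d(\mu-1)+1$ linearly independent constraints from (a)--(b) are tight. Writing $M$ for their $N\times N$ coefficient matrix, Cramer's rule expresses $\varepsilon_0$ as a ratio of two integer determinants, one of which is $\det M$; so the point reduces to showing $|\det M|\le\mu$.

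The determinant bound is the step I expect to be the crux. Every row of $M$ is supported on the $\varepsilon$-column together with the $\mu-1$ columns of a single coordinate. The $\mu-1$ columns of a given coordinate vanish outside the rows that touch that coordinate; since $M$ is invertible these columns are independent, so each coordinate touches at least $\mu-1$ rows, and as the row counts sum to $N=d(\mu-1)+1$, exactly one coordinate $i^*$ touches $\mu$ rows and each other touches exactly $\mu-1$. For $i\ne i^*$ the resulting $(\mu-1)\times(\mu-1)$ block $B_i$ is invertible and is a submatrix of an incidence matrix of a graph (columns index cubes, each row has one $+1$ and one $-1$), hence totally unimodular, so $\det B_i=\pm1$. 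Reordering so that the rows touching $i^*$ come first and the columns are ordered $\varepsilon$, then coordinate $i^*$, then the rest, the off-diagonal block above $\mathrm{diag}(B_i)_{i\ne i^*}$ is zero, so the Schur-complement step gives $|\det M|=|\det[\,\mathbf c\mid B\,]|$ with $B$ the $\mu\times(\mu-1)$ totally unimodular block of $i^*$ and $\mathbf c\in\{0,\pm1\}^\mu$ the $\varepsilon$-column on those rows. Expanding along $\mathbf c$, each cofactor is a minor of $B$, hence lies in $\{0,\pm1\}$, so $|\det M|\le\sum_r|\mathbf c_r|\le\mu$; therefore $t_0$ divides $|\det M|\le\mu$.

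Finally, the discretization. Let $\varepsilon=s/t\ge\varepsilon_0$ be rational; enlarging an $\varepsilon_0$-covering gives a covering $\{A_j\}$ by $\mu$ closed $\varepsilon$-sub-cubes. Round every base-vertex coordinate down to the nearest multiple of $1/t$, obtaining cubes $\widetilde A_j$. A short computation — crucially using that $\varepsilon$ is an integer multiple of $1/t$ — yields the equivalence: the grid cell $\prod_i[k_i/t,(k_i+1)/t]$ is contained in $\widetilde A_j$ if and only if the half-open cube $\prod_i[x_i^j,x_i^j+\varepsilon)$ contains the cell's ``top corner'' $\bigl((k_i+1)/t\bigr)_{i=1}^d$. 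As the top corners run over all of $\tfrac1t\mathbb Z^d$, the $\widetilde A_j$ cover $T^d$ if and only if every grid point lies in some half-open cube $\prod_i[x_i^j,x_i^j+\varepsilon)$. This may fail for the given covering — a grid point might sit only on ``top'' facets of cubes — and repairing it is the remaining obstacle: the set $B$ of points lying in no half-open cube is contained in the finitely many top facets of the $A_j$, hence is Lebesgue-null, so $\bigcup_{g\in\frac1t\mathbb Z^d}(g-B)$ is null; choosing $v$ outside this set, the translated covering $\{A_j+v\}$ has $\tfrac1t\mathbb Z^d$ disjoint from its bad set, and the rounding construction applied to $\{A_j+v\}$ produces the required covering by $\mu$ sub-cubes whose base vertices are multiples of $1/t$.
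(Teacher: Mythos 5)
Your proposal is correct in substance, but it takes a genuinely different route from the paper. For the bound $t_0\le\mu$ the paper argues combinatorially: it first shifts whole connected components of the ``shared endpoint'' graph $G_i(\mathcal A)$ (two segments adjacent when they have a common endpoint) to make every $G_i$ connected; then, if $\varepsilon$ admitted no denominator in $[\mu]$, it normalizes so that all base coordinates lie in $\{k\varepsilon\bmod 1:\ 0\le k\le\mu\}$ and shrinks $\varepsilon$ slightly without changing the combinatorial structure of this finite set, producing a covering by smaller cubes and contradicting minimality. The same connectedness, in the rational case $\varepsilon=s/t$, forces all base coordinates to be multiples of $\varepsilon$, hence of $1/t$, which is how the paper gets the second part. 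You instead derive $t_0\le\mu$ from the covering duality (for every selection $f\colon[\mu]\to[d]$ some coordinate's subfamily covers the circle), optimality at a vertex of the resulting polyhedron, total unimodularity of the incidence-type blocks, and Cramer's rule; and you get the second part by a generic translation followed by rounding the base vertices to the grid $\tfrac1t\mathbb Z^d$. Both routes work: the paper's is elementary and its connectification step serves both parts at once, while yours is more mechanical once the LP is set up and would transfer to other covering problems with the same network structure. Two small repairs to your sketch: pointedness requires one normalization per coordinate (fixing all $d$ coordinates of the cube $j_0$ at the origin, which is presumably what you intend, and which also makes $N=d(\mu-1)+1$ the right count); and in the rounding equivalence the correct criterion is that the cell's top corner lies in $\prod_i(x_i^j,x_i^j+\varepsilon]$, not in $\prod_i[x_i^j,x_i^j+\varepsilon)$ --- a harmless slip, since your generic-translation step makes every grid point avoid all facets, after which open, closed and half-open containment coincide.
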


Lemma~\ref{lm1} is a direct consequence of the first part of Claim~\ref{clm1}. To show that Claim~\ref{clm1} also yields Lemma~\ref{lm2}, take an $\varepsilon\in\left[\frac{1}{r},\frac{1}{r-1}\right)$ and put $\mu=\mu(d;\varepsilon)$; notice that $\mu\leq r^d$ and choose the fraction $s/t$ as in the statement of Lemma~\ref{lm2}. By Claim~\ref{clm1}, the minimal number $\varepsilon_0$ such that there is a covering of the torus $T^d$ by $\mu$ `$\varepsilon_0$-sub-cubes' has the form $\varepsilon_0=s_0/t_0$ with $t_0\leq\mu\leq r^d$; therefore, $\varepsilon_0\leq s/t\leq\varepsilon$ and hence $\mu(d;\varepsilon_0)=\mu(d;s/t)=\mu$. Moreover, assuming that $\varepsilon=s'/t'$ is rational, the second part of Claim~\ref{clm1} implies that there exists a covering of $T^d$ with $\mu$ `$\frac {s'}{t'}$-sub-cubes' such that all their base vertices are integer multiples of $1/t$. This covering induces a covering of $(\mathbb Z/t\mathbb Z)^d$ by $\mu$ `sub-cubes' of side length $s$, thus showing that $\mu_0(d;s',t')\leq \mu(d;s'/t')$. This yields Lemma~\ref{lm2}, as the converse inequality $\mu_0(d;s',t')\geq \mu(d;s'/t')$ is trivial.

The remaining part of the section is devoted to the proof of Claim~\ref{clm1}.

\smallskip
Notice that the minimal $\varepsilon_0$ chosen in the statement of Claim~\ref{clm1} exists by a standard compactness argument. Consider now any $\varepsilon$ such that there is a covering $\mathcal{A}$ of the torus $T^d$ by $\mu$ `$\varepsilon$-sub-cubes'
$$
A_j:=\left\{(x_1,\ldots,x_d):\,x_i\in\left[x_i^j,x_i^j+\varepsilon\right]\right\},\quad j\in[\mu].
$$
We will modify this covering in two steps. The result of Step~1 will, in particular, establish the second part of Claim~\ref{clm1}. In Step~2, we will see that, if $\varepsilon$ is not of the form $\varepsilon=s/t$ with $t\leq \mu$, then there is a covering with $\mu$ `$\varepsilon'$-sub-cubes' for some $\varepsilon'<\varepsilon$, thus establishing the first part of Claim~\ref{clm1}.

\smallskip
Let $i\in[d]$. Consider the segments $\left[x_i^j,x_i^j+\varepsilon\right]$, $j\in[\mu]$. Let us introduce a graph $G_i(\mathcal{A})$ with vertex set $[\mu]$. Let vertices $j_1,j_2\in[\mu]$ be adjacent in $G_i(\mathcal{A})$ if and only if the sets $\left\{x_i^{j_1},x_i^{j_1}+\varepsilon\right\}$, $\left\{x_i^{j_2},x_i^{j_2}+\varepsilon\right\}$ are not disjoint (i.e., the respective segments have at least one common endpoint).

\smallskip
\textit{Step 1.} We show that there exists a covering $\widetilde{\mathcal A}$ of $T^d$ by $\mu$ `$\varepsilon$-sub-cubes' such that $G_i(\widetilde {\mathcal A})$ is connected, for every $i\in[d]$. For that purpose, we shift some `sub-cubes' as follows.

Assume that, in $G_i(\mathcal{A})$, there are several connected components $H_1,\ldots,H_{\ell}$, $\ell\geq 2$.
Choose an endpoint $a$ of segment $j$ with $j\in H_1$ and an endpoint $b$ of segment $j'\in [\mu]\setminus H_1$ (i.e., $a\in\{x_i^j,x_i^j+\varepsilon\}$ and $b\in\{x_i^{j'},x_i^{j'}+\varepsilon\}$) such that their distance $\rho=|a-b|$ is minimal over the choice of the components, of the segments inside them, and their endpoines; since $j$ and $j'$ are in different components, we have $\rho>0$.

Without loss of generality, $a>b$. Now let us shift all segments labeled by $H_1$ leftwards by distance $\rho$. By the choice of $\rho$, no point may remain uncovered, so we get a covering $\mathcal{A}_1$ of $T^d$, where the graph $G_i(\mathcal{A}_1)$ consists of at most $\ell-1$ components. If $G_i(\mathcal{A}_1)$ is not connected, we perform the same procedure with $\mathcal{A}_1$ and obtain a covering $\mathcal{A}_2$ with $G_i(\mathcal{A}_2)$ having at most $\ell-2$ components. Proceeding in this way, we reach a covering $\widehat{\mathcal A}$ where $G_i(\widehat{\mathcal A})$ is connected.

Applying the same procedure for every $i\in[d]$, we get a desired covering $\widetilde{\mathcal A}$.

\smallskip
Now, if $\varepsilon=s/t$ is rational, we may assume that that $x_1=(0,0,\dots,0)$. By connectedness of all graphs $G_i(\widetilde{\mathcal{A}})$, all coordinates of the vertices of the `sub-cubes' are multiples of $1/t$; thus the second part of Claim~\ref{clm1} is established.


\smallskip
\textit{Step 2.}
Now we may assume that for every $i\in[d]$, the graph $G_i(\mathcal{A})$ is connected. Suppose that there is no integer $t \in[\mu]$ such that $t\varepsilon\in \mathbb{N}$; we will show that there exists an $\varepsilon'<\varepsilon$ such that $T^d$ can be covered by $\mu$ `$\varepsilon'$-sub-cubes', thus proving the first part of Claim~\ref{clm1}.

Fix $i\in[d]$ and consider the following relation $<_i$ on the set of segments $[x_i^j,x_i^j+\varepsilon]$:
\begin{center}
if $x_i^{j_1}+\varepsilon=x_i^{j_2}\mod 1$, then
$[x_i^{j_1},x_i^{j_1}+\varepsilon]<_i[x_i^{j_2},x_i^{j_2}+\varepsilon].$
\end{center}
Since $q\varepsilon\notin\mathbb{N}$ for any $q \in[\mu]$, we get that $<_i$ is a
partial order (recall that some numbers of the form $x_i^j$ may coincide).

Choose now a minimal segment $[x_i^{j_i},x_i^{j_i}+\varepsilon]$ with respect to $<_i$. Shifting all $i$th coordinates by $x_i^{j_i}$, we may assume that $x_i^{j_i}=0$. Performing such shifts along all coordinates, we arrive at the situation where all coordinates of all vertices of the `sub-cubes' lie in the set
$$
  I_\varepsilon=\bigl\{ k\varepsilon\mod 1\colon k=0,1,\dots,\mu\bigr\}.
$$
Notice that $I_\varepsilon$ consists of $\mu+1$ distinct numbers; let $\gamma$ be the minimal distance between elements of $I_\varepsilon$ (modulo~$1$).

Choose a positive $\delta<\gamma/\mu$ and $\varepsilon'=\varepsilon-\delta$. Then, for every $k_1,k_2\in\{0,1,\dots,\mu\}$ we have
$$
  \{k_1\varepsilon\}<\{k_2\varepsilon\} \quad\text{if and only if} \quad
  \{k_1\varepsilon'\}<\{k_2\varepsilon'\}.
$$
Innformally speaking, the sets $I_\varepsilon$ and $I_{\varepsilon'}$ have the same combinatorial structure.

Now choose the `$\varepsilon'$-sub-cubes'
$$
  A_j'=\left\{(x_1,\ldots,x_d):\,x_i\in\left[(x')_i^j,(x')_i^j+\varepsilon'\right]\right\},\quad j\in[\mu],
$$
as follows: if $x_i^j=\{k\varepsilon\}$, then $(x')_i^j=\{k\varepsilon'\}$. The above relation shows that for every $a'\in[0,1)$ there exists $a\in[0,1)$ such that
$$
  a'\in[(x')_i^j,(x')_i^j+\varepsilon'] \quad\text{if and only if}\quad a\in[x_i^j,x_i^j+\varepsilon].
$$
Therefore, for any point $x'=(x_1',\dots,x_d')\in T^d$ there exists a point $x=(x_1,\dots,x_d)\in T^d$ such that $x'$ is covered by $A_j'$ if and only if $x$ is covered by $A_j$. Thus, since the $A_j$ form a covering of $T^d$, so do the $A_j'$, and we have constructed a covering by $\mu$ `$\varepsilon'$-sub-cubes', as desired.

\section{Proofs of Theorems~\ref{right_interval} and~\ref{lower_bounds}}
\label{proof_lower_bound}

Introduce the following conditions on positive integers~$s$ and~$t$:
\begin{enumerate}
  \renewcommand\theenumi{(\roman{enumi})}
    \item\label{one} $\left\lfloor\frac{t}{s}\right\rfloor=r\geq 2$ and $\left\lceil\frac{t}{s}\right\rceil=r+1$,
    \item\label{two} $\left\lceil\frac{t}{s}\left\lceil\frac{t}{s}\right\rceil\right\rceil = s$ and, consequently, $\left\lceil\frac{t}{s}\left\lceil\frac{t}{s}\left\lceil\frac{t}{s}\right\rceil\right\rceil\right\rceil = t$,
    \item\label{4_ineq_main} $ (s^2 - t(r+1))r \leq s - (s^2 - t(r+1))$.
    \item\label{5_ineq_main} $ r(s^2 + s - t(r+1)) \leq t$;
    \item\label{4 or 5} at least one of the inequalities in~\ref{4_ineq_main} and~\ref{5_ineq_main} is strict.
\end{enumerate}

In fact, Theorems~\ref{right_interval} and~\ref{lower_bounds} are particular cases of the following lemma. We start with proving the Lemma, and then we derive both theorems from it.

\begin{lm} If $t,s$ are coprime positive integers satisfying the conditions \ref{one}--\ref{4 or 5}, then $\mu(3;s/t)>t$.
\label{Lemma_for_lower_bounds}
\end{lm}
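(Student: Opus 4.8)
The plan is to work entirely in the discrete model: by Lemma~\ref{lm2} it suffices to show $\mu_0(3;s,t)>t$, i.e. that no $t$ sub-cubes of side $s$ can cover $[\mathbb Z/t\mathbb Z]^3$. Since a single cube contains $s^3$ nodes and the torus has $t^3$ nodes, a covering by exactly $t$ cubes would have total multiplicity $ts^3$, and by condition~\ref{two} one computes that $ts^3 = t^3 + (\text{small positive excess})$; the point is that the ``slack'' $ts^3 - t^3$ is extremely tight relative to $t^3$. So I would argue by contradiction: assume a covering $\mathcal C$ by $t$ cubes exists, and derive strong rigidity constraints on how their base vertices are distributed, eventually contradicting conditions~\ref{4_ineq_main}, \ref{5_ineq_main}, \ref{4 or 5}.

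First I would analyze the one-dimensional shadow. For each coordinate direction $i$, project the $t$ base vertices onto the $i$-axis $\mathbb Z/t\mathbb Z$; each projected point, thought of as the left end of a length-$s$ arc, must have the $t$ arcs covering the circle. Condition~\ref{one} says $t = rs - u$ for some $0<u<s$ (precisely $u = (r+1)s - t \cdot$ adjusted; I'd pin down $u$ via $\lceil t/s\rceil = r+1$), so covering the $t$-cycle by arcs of length $s$ needs at least $\lceil t/s\rceil = r+1$ arcs in each direction, and with exactly $t$ arcs available across all cubes there is an averaging bound on how many ``columns'' (fibers over a point of the $(d{-}1)$-dimensional base) can be nonempty. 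The key structural step is a fiber-by-fiber counting: fix two coordinates; the cubes whose base vertex has prescribed values in those two coordinates (up to the $s$-window) must cover a sub-circle, forcing $\geq r+1$ of them, and summing these local lower bounds against the global budget $t$ produces the inequalities in~\ref{4_ineq_main} and~\ref{5_ineq_main}. The quantity $s^2 - t(r+1)$ appearing there is exactly the ``deficiency'' measuring by how much $s^2$ exceeds $t(r+1)$ — this is the excess number of cubes, beyond the bare minimum, that can be afforded in the two-dimensional cross-section, and the two inequalities encode the two ways this excess can be spent (either enlarging a block of columns, or adding an extra layer).

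The main obstacle, I expect, is the two-dimensional rigidity lemma: showing that in any optimal-size covering the base vertices must be arranged in a near-lattice pattern so that the local counts are forced. This likely mirrors the $d=2$ argument from \cite{2-dimension} that proves $\mu(2;\varepsilon) = \lceil\frac1\varepsilon\lceil\frac1\varepsilon\rceil\rceil$ exactly: one shows that an efficient covering of the $t$-torus by $s$-cubes in the plane decomposes into ``rows'' each covered by $\lceil t/s\rceil$ cubes, with the rows themselves forming an efficient $1$-dimensional covering. I would import that structure, apply it to each of the three coordinate cross-sections simultaneously, and observe the constraints are mutually incompatible under coprimality of $s,t$ together with~\ref{4 or 5} — the strictness in~\ref{4 or 5} is what rules out the single remaining borderline configuration where all three slack inequalities are equalities (which would correspond to a perfect tiling-like arrangement that coprimality forbids). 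The derivation of each theorem then reduces to checking the arithmetic: for Theorem~\ref{right_interval} one verifies conditions~\ref{one}--\ref{4 or 5} hold for the relevant $s/t$ near $1/r$ from the right (combined with the trivial $\mu=r^3$ on $[1/r, \cdot)$ via~\eqref{known-bounds}), and for Theorem~\ref{lower_bounds} one plugs in $s = r^2+r+\xi$, $t = r^3+r^2+2\xi r + \lfloor \xi^2/(r+1)\rfloor$ and checks that hypothesis~\eqref{xi^2} is precisely the translation of~\ref{4_ineq_main}/\ref{5_ineq_main} with the required strictness.
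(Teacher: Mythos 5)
Your proposal is a strategy outline rather than a proof, and the central steps are missing or would not go through as described. The decisive ingredients of an actual argument are: (a) a layer-by-layer count showing that, in a putative covering by $t$ cubes, every layer $x_3=\alpha$ is met by \emph{exactly} $s$ cubes (this uses the $2$-dimensional value $\mu(2;\varepsilon)=\lceil\frac1\varepsilon\lceil\frac1\varepsilon\rceil\rceil$ together with \ref{two} and the identity $\sum_\alpha f(\alpha)=st$), which forces a rigid ``successor'' structure: by coprimality the third coordinates of the base vertices are exactly $\mathbb Z/t\mathbb Z$ and each cube is followed by one starting exactly where it ends; (b) a double-counting claim that in a layer covered by $s$ squares at most $s^2-t(r+1)$ columns meet $r+2$ or more squares, which yields a quantitative overlap bound $|x_\gamma^0(i)-x_\gamma^0(i+s)|\le s^2-t(r+1)$ for $\gamma=1,2$; and (c) an accumulation of this drift over $r$ successive steps, split into the case of a strict inequality (where conditions \ref{4_ineq_main}, \ref{5_ineq_main}, \ref{4 or 5} produce a column met by exactly $r+1$ squares that cannot be fully covered) and the all-equalities case (where coprimality makes the two horizontal drifts globally constant, so points of the form $(0,s,x_3)$ are uncovered). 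None of these appear in your sketch; in particular you never exploit the exact-count-per-layer phenomenon, which is what makes the whole machine run, and your description even reverses the logical role of \ref{4_ineq_main} and \ref{5_ineq_main} (they are hypotheses to be used against the covering, not conclusions to be ``produced'' by summing local bounds).

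The more serious structural problem is that your plan leans on a ``two-dimensional rigidity lemma'' asserting that optimal coverings of the $2$-torus decompose into rows in a near-lattice pattern, which you hope to import from the McEliece--Taylor paper. That paper only determines the \emph{value} $\mu(2;\varepsilon)$; it does not classify optimal coverings, and such a classification is neither available nor needed. Optimal two-dimensional coverings can be quite irregular, so a proof resting on that rigidity statement has a genuine gap. The working proof deliberately replaces it by the much weaker counting claims in (b) above, applied only to the specific layers $\mathcal S_s$ and $\mathcal S_0$, and extracts the needed rigidity solely in the third direction from the exact layer counts. Finally, the volume comparison $ts^3$ versus $t^3$ that you propose as a starting point plays no role: the relevant budget is the two-dimensional one, $s^2$ column--square incidences per layer against $t(r+1)$ forced incidences, which is exactly where the deficiency $s^2-t(r+1)$ in \ref{4_ineq_main}--\ref{5_ineq_main} comes from.
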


\begin{proof} Due to Lemma~\ref{lm2}, it suffices to show $\mu_0(3;s,t)>t$. By the indirect assumption, there is a covering of $(\mathbb Z/t\mathbb Z)^3$ with $t$ `sub-cubes'
$C_1,C_2, \ldots, C_t$ of side~$s$.

For $i\in[t]$,
let 
$\mathbf{x}^0(i)=(x^0_1(i),x^0_2(i),x^0_3(i))$ be the base vertex of cube $C_i$. For every $\alpha\in\mathbb Z/t\mathbb Z$, define the \emph{$\alpha$th layer $\mathcal S_\alpha$} as the intersection of the torus with the hyperplane $x_3=\alpha$, i.e., $\mathcal{S}_{\alpha}=(\mathbb{Z}/t\mathbb{Z})^3\big|_{x_3=\alpha}$.
Each such intersection is a 2-torus covered with the \emph{squares}, i.e., the (nonempty) intersections of the `sub-cubes' with $\mathcal S_\alpha$.

Let us denote the number of squares covering layer $\mathcal{S}_\alpha$ by $f(\alpha)$.
Since $\mu(2;\varepsilon)=\bigl\lceil \frac 1\varepsilon\bigl\lceil \frac1\varepsilon\bigr\rceil\bigr\rceil$ (see~\cite{2-dimension}), we get $f(\alpha)\geq s$ for every~$\alpha$ due to \ref{two}.
On the other hand, $\sum_{\alpha\in\mathbb Z/t\mathbb Z} f(\alpha) = st$  because each `sub-cube' meets exactly $s$ layers. This means that $f(\alpha)=s$ for every $\alpha\in\mathbb Z/t\mathbb Z$. In particular, for every $i\in[t]$ we have $f(x^0_3(i)+s-1)=f(x^0_3(i)+s)$. In other words, the last layer of the cube $C_i$ is covered by the same number of squares as the next layer. It is only possible when there is $j\in[t]$ such that $x^0_3(j)=x^0_3(i)+s$;
informally speaking, cube $C_j$ `starts' exactly when $C_i$ `finishes'. In this situation, say that cube~$C_j$ is a \emph{successor} of cube~$C_i$.

Since $s$ and $t$ are coprime, for every $\alpha\in\mathbb Z/t\mathbb Z$, there is a cube $C_i$ with $x^0_3(i)=\alpha$. 
Renumbering the cubes $C_i$, we assume that $x_3^0(i)=i$ for all $i\in[t]$; we assume that the numeration of the cubes is cyclic, i.e., $C_{i+t}=C_i$; so, further we assume that the indices $i$ run over $\mathbb Z/t\mathbb Z$. Now, each cube $C_i$ has a unique successor $C_{i+s}$.


For $\gamma\in\{1,2,3\}$, we say that a \emph{$\gamma$-column} is a set of $t$ elements in $(\mathbb Z/t\mathbb Z)^3$ differing only in the $\gamma$th coordinate. Due to~\ref{one}, each $\gamma$-column meets at least $r+1$ `sub-cubes'.

\begin{clm}
  \label{clm_columns}
  Assume that a layer $\mathcal S$ is covered with squares $A_1,A_2,\dots,A_s$. Fix $\gamma\in\{1,2\}$. Then there are no more than $s^2-t(r+1)$ $\gamma$-columns crossing at least $r+2$ squares in the covering.
\end{clm}

\begin{proof} Let $h$ be the number of $\gamma$-columns crossing at least $r+2$ squares in the covering. Consider the pairs of the form $(U,A_i)$, where $i\in[s]$ and $U$ is a $\gamma$-column in~$\mathcal S$ that meets $A_i$. The number of such pairs is exactly $s^2$ since every square $A_i$ meets $s$ columns. Since each $\gamma$-column crosses at least $r+1$ squares, and $h$ of them cross at least $r+2$ squares, we obtain
$$
  s^2\geq h(r+2)+(t-h)(r+1)=t(r+1)+h,
$$
so $h\leq s^2-t(r+1)$.
\end{proof}

\begin{clm}
  \label{clm_intersection}
  For every $i\in\mathbb Z/t\mathbb Z$ and every $\gamma\in\{1,2\}$, the intersection of the sets of $\gamma$-coordinates of $C_i$ and $C_{i+s}$ has cardinality at least $s-(s^2-(r+1)t)$, i.e.
  $$
   \left|\{x_{\gamma}^0(i),\ldots,x_{\gamma}^0(i)+s-1\}\cap\{x_{\gamma}^0(i+s),\ldots,x_{\gamma}^0(i+s)+s-1\}\right|\geq s-(s^2-(r+1)t).
  $$
\end{clm}

\begin{proof} 
The layer $\mathcal{S}_{s}$ meets `sub-cubes' $C_1, C_2, \ldots, C_s$, while $\mathcal{S}_{s+1}$ meets `sub-cubes' $C_2, C_3, \ldots, C_{s+1}$. Let $A_{\ell}$ be the projection of $C_{\ell}$ onto $\mathcal{S}:=\mathcal{S}_{s}$, for $\ell\in[s+1]$.

Consider the covering of $\mathcal S$ by $A_1,A_2,\dots,A_s$. By Claim~\ref{clm_columns}, among the $\gamma$-columns crossing $A_1$, there are at least $s-(s^2-t(r+1))$ ones crossing exactly $r+1$ squares in the covering. This means that each of those columns is not covered completely by $A_2,A_3,\dots,A_s$. Hence each of those columns needs to cross $A_{s+1}$, since $\mathcal S$ is covered by $A_2,A_3,\dots,A_{s+1}$ as well. This finishes the proof.
\end{proof}

\smallskip
By Claim~\ref{clm_intersection}, for each $i\in\mathbb Z/t\mathbb Z$ and $\gamma\in\{1,2\}$ we have $|x_\gamma^0(i) - x_\gamma^0(i+s)| \leq s^2 - t(r+1)$. Therefore,
$$
|x_\gamma^0(i) - x_\gamma^0(i+rs)| \leq\sum_{j=1}^r |x_\gamma^0(i+(j-1)s) - x_\gamma^0(i+js)| \leq (s^2 - t(r+1))r.
$$
Now we distinguish two cases.

\smallskip
\noindent
\textit{Case 1.} The above inequality is sometimes strict, i.e., there exist $\gamma\in\{1, 2\}$ and $i\in\mathbb Z/t\mathbb Z$ such that
$$
  |x_{\gamma}^0(i) - x_{\gamma}^0(i+rs)| < (s^2 - t(r+1))r.
$$
Without loss of generality, we assume $i=0$. For $\kappa=1,2$, we denote
$$
  N_\kappa:=\left|\{x_\kappa^0(0),\ldots,x_\kappa^0(0)+s-1\}\cap\{x_\kappa^0(rs),\ldots,x_\kappa^0(rs)+s-1\}\right|;
$$
in other words, $N_\kappa$ is the number of common $\kappa$-coordinates in $C_0$ and $C_{rs}$.
Then we have
\begin{equation}
  \label{last_intersection}
  N_\gamma>s - (s^2 - t(r+1))r \quad\text{and}\quad
  N_{3-\gamma}\geq s - (s^2 - t(r+1))r.
\end{equation}

We show that there exists $\kappa\in\{1,2\}$ such that
\begin{equation}
  N_\kappa>s(r+1)-t \quad\text{and} \quad
  N_{3-\kappa}>s^2-t(r+1).
  \label{kappa}
\end{equation}
Indeed, notice that
\begin{equation}
  \label{inter}
  s - (s^2 - t(r+1))r
  =s(r+1)-(s+s^2-t(r+1))r\geq s(r+1)-t
\end{equation}
by~\ref{5_ineq_main}; moreover, if the inequality in~\ref{5_ineq_main} is strict, then so is the last inequality above.
By~\ref{4 or 5}, one of the inequalities in~\ref{4_ineq_main} and~\ref{5_ineq_main} is strict. If the inequality in~\ref{4_ineq_main} is strict, then we can put $\kappa=\gamma$, since $N_{3-\kappa}=N_{3-\gamma}\geq s-(s^2-t(r+1))r>s^2-t(r+1)$ by~\ref{4_ineq_main} and $N_\kappa=N_\gamma>s - (s^2 - t(r+1))r\geq s(r+1)-t$ by~\eqref{inter}. Otherwise, \ref{5_ineq_main} is strict, and we can put $\kappa=3-\gamma$, as $N_\kappa=N_{3-\gamma}\geq s - (s^2 - t(r+1))r>s(t+1)-t$ by~\eqref{inter} and $N_{3-\kappa}=N_\gamma>s-(s^2-t(r+1))r\geq s^2-t(r+1)$ by~\ref{4_ineq_main}.

\smallskip
Without loss of generality, we assume that $\kappa=1$ satisfies~\eqref{kappa}. %
Consider $\mathcal{S}:=\mathcal{S}_0$, and let $A_\ell$ be the projection of $C_\ell$ onto $\mathcal S$, for $\ell\in\{t+1-s,t+2-s,\dots,t\}$. Notice that $\mathcal S$ is covered by the $A_\ell$, where $\ell$ runs through the same range.

Recall that $t-s+1\leq rs< t$ by~\ref{one}. In $\mathcal S$, consider any of $\kappa$-columns crossing both $A_{rs}$ and $A_0(=A_t)$; there are at least $N_{3-\kappa}>s^2-t(r+1)$ such $\kappa$-columns. By Claim~\ref{clm_columns}, at least one of those is covered by exactly $r+1$ squares in the covering. But the intersections of the column with two of those squares, namely $A_t$ and $A_{rs}$, have $N_\kappa$ common elements, hence those $r+1$ squares cover at most
$$
  s(r+1)-N_\kappa<s(r+1)-(s(r+1)-t)=t
$$
elements in the column. Therefore, this column is not covered completely --- a contradiction.

\smallskip
\noindent
\textit{Case 2.} Conversely, assume now that, for every $\gamma\in\{1, 2\}$ and $i\in \mathbb Z/t\mathbb Z$,
\begin{equation}
|x_\gamma^0(i) - x_\gamma^0(i+rs)| =\sum_{j=1}^r |x_\gamma^0(i+(j-1)s) - x_\gamma^0(i+js)| = (s^2 - t(r+1))r.
\label{two_equalities}
\end{equation}

Fix $\gamma\in\{1,2\}$ and $i\in\mathbb Z/t\mathbb Z$. By the triangle inequality, (\ref{two_equalities}) is only possible when, for every $j\in[r]$,
$$
  |x_\gamma^0(i+(j-1)s) - x_\gamma^0(i+js)|=s^2-t(r+1)
$$
and, moreover, all expressions of the form $x_\gamma^0(i+(j-1)s) - x_\gamma^0(i+js)$ are equal, for $j\in[r]$. This in fact means that all expressions of the form $x_\gamma^0(i)-x_\gamma^0(i+s)$ are equal, and this common value is $\pm(s^2-t(r+1))$. Without loss of generality, we assume that, for every $i\in \mathbb Z/t\mathbb Z$,
$$
x_2^0(i+s) - x_2^0(i) = x_1^0(i+s) - x_1^0(i) = s^2 - t(r + 1)
$$
(the generality is not lost, as we can renumber each coordinate as $i\mapsto -i$).

So, the first and the second coordinates of each base vertex are equal; so those coordinates of any point in a `sub-cube' differ by at most $s-1$. Therefore, no point of the form $(0,s,x_3)$ is covered by any `sub-cube' (recall here that $t>2s$ by~\ref{one}). This is a contradiction.
%
%
%
%
%
\end{proof}


\smallskip
\textit{Proof of Theorem~\ref{lower_bounds}.} We show that the parameters in the theorem satisfy \ref{one}--\ref{4 or 5}; the theorem follows then from Lemma~\ref{Lemma_for_lower_bounds}.

To prove~\ref{one}, write
$$
0<t-rs=\xi r+\lfloor\xi^2/(r+1)\rfloor\leq
\xi r+r-1\leq r^2+r-1<s.
$$

Notice that
\begin{equation}
 s^2-t(r+1)=\xi^2-\left\lfloor\frac{\xi^2}{r+1}\right\rfloor(r+1)\in[0,\xi],
\label{s^2-t(r+1)}
\end{equation}
where the last inclusion follows from~\eqref{xi^2}. Therefore,
$$
 \left\lceil\frac{t}{s}\left\lceil\frac{t}{s}\right\rceil\right\rceil=
 \left\lceil\frac{t(r+1)}{s}\right\rceil=
  \left\lceil s-\frac{s^2-t(r+1)}{s}\right\rceil\in
  \left[s-\left\lfloor\frac{\xi}{s}\right\rfloor,s\right]
$$
which yields~\ref{two}. 

Moreover,~(\ref{s^2-t(r+1)}) implies that
$$
 (s^2-t(r+1))(r+1)\leq\xi(r+1)\leq r^2+r
 <
 r^2+r+\xi=s.
$$
This verifies~\ref{4_ineq_main} with a strict inequality (hence \ref{4 or 5} as well).

Finally,~(\ref{s^2-t(r+1)}) implies that
$$
 (s + s^2 - t(r+1))r\leq(s+\xi)r=(r^2 + r + 2\xi)r=r^3 + r^2+ 2r\xi\leq t
$$
which establishes~\ref{5_ineq_main}.
\qed

\medskip
\textit{Proof of Theorem~\ref{right_interval}.} For convenience, we replace $r$ with $r+1$ (i.e., we prove that, for any $r\in\mathbb{N}$ and any $\varepsilon\in\left[\frac{1}{r+1},\frac{(r+1)^2-1}{(r+1)^3-r-2}\right)$, $\mu(3;\varepsilon)=(r+1)^3$).

Let $s = (r+1)^2$, $t = (r+1)^3 - 1$. Notice that the fractions $\frac{1}{r+1}$ and $\frac{(r+1)^2-1}{(r+1)^3-r-2}$ are Farey neighbours (for properties of Farey sequence, see \cite[Chapter~III]{hardy-wright}). Therefore, the rational number between them with the smallest denominator is $\frac{1+(r+1)^2-1}{(r+1)+(r+1)^3-r-2}=\frac{(r+1)^2}{(r+1)^3-1}$, and the next smallest denominator is already greater than $(r+1)^3$. By Lemma~\ref{lm2}, in order to prove Theorem~\ref{right_interval}, it suffices to show that $\mu(3;s/t)\geq (r+1)^3$. For that purpose, we show that the numbers $s$ and $t$ satisfy the conditions of Lemma~\ref{Lemma_for_lower_bounds}.

%
%
Clearly, $s$ and $t$ are coprime. The conditions \ref{one}--\ref{two} 
are straightforward. Next, since $s^2-t(r+1)=r+1$, we have
$$
(s^2 - t(r+1))r = (r+1)r = (r+1)^2 - (r+1) = s - (s^2- t(r+1))
$$
which yields~\ref{4_ineq_main} (with the equality sign). It remains to show~\ref{5_ineq_main} with the strict sign:
$$
  r(s^2 + s - t(r+1)) = r(r+1)(r+2) = (r+1)((r+1)^2 - 1) < t.
  \eqno\Box
$$

\section{Proof of Theorem~\ref{upper_bounds}}
\label{th5_proof}

Let $r\geq 2$ be an integer, $\xi\in[r]$, $s=r^2+r+\xi$, $t=r^3+r^2+\xi(r+1)$.
We construct a covering of $[\mathbb{Z}/t\mathbb{Z}]^3$ consisting of $t$ `sub-cubes' of side $s$.

The main idea here is the same as in~\cite{2-dimension}: to take each next `sub-cube' shifted relative to the previous one by a fixed integer vector $v$. We take
$$
v = (1, r, (r^2+r+1)).
$$
Thus, we need to prove that the `sub-cubes' $C_i$ with base vertices $(i, r i, (r^2+r+1)i)$, $i \in \mathbb Z/t\mathbb Z$, cover the torus $[\mathbb{Z}/t\mathbb{Z}]^3$. Since the points of the lattice with the same first coordinate $j$ are covered by $s$ consecutive cubes $C_{j-i}$ with $i\in\{0,1,\dots,s-1\}$, the following claim finishes the proof of Theorem~\ref{upper_bounds}.



\begin{clm} The squares of side $s$ with base vertices $(r i, (r^2+r+1)i)$, $i \in \{0,1, \ldots, s-1\}$, cover $[\mathbb{Z}/t\mathbb{Z}]^2$.
\label{clm 2-dimension}
\end{clm}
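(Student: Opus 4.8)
The plan is to translate the covering condition into a one‑dimensional density statement and then verify it by an explicit (if slightly delicate) case analysis. Write $w=r^2+r+1$ for the second coordinate of the shift and set $m=r^2+\xi$, so that $s=m+r$ and $t=(r+1)m$. The square with base vertex $(ri,wi)$ covers a point $(x,y)$ precisely when $x-ri\bmod t$ and $y-wi\bmod t$ both lie in $\{0,1,\dots,s-1\}$. So, for a fixed first coordinate $x$, let $S(x)\subseteq\{0,1,\dots,s-1\}$ be the set of indices $i$ with $x-ri\bmod t\le s-1$ (the squares that ``reach'' $x$ horizontally), and put $W(x)=\{\,wi\bmod t:i\in S(x)\,\}$. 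Claim~\ref{clm 2-dimension} follows once we show that, for every $x$, the set $W(x)$ is \emph{$s$-dense}, i.e. every residue of $\mathbb Z/t\mathbb Z$ lies in $[z,z+s-1]$ for some $z\in W(x)$; equivalently, the cyclic gaps between consecutive elements of $W(x)$ are all at most $s$. Indeed, given any $y$ we then find $i\in S(x)$ with $y-wi\bmod t\le s-1$, so $(x,y)\in C_i$.

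First I would record the arithmetic facts used throughout: $r(s-1)=t-\xi-r<t$ (so the base points $ri$, $0\le i\le s-1$, never wrap in the first coordinate), $rw=r^3+r^2+r<t$, the cyclic gap value $t-rw=\xi(r+1)-r$, and the key congruence $(w-1)m=r(r+1)m=rt\equiv 0$, i.e. $wm\equiv m\pmod t$. \emph{Step 1} is to describe $S(x)$ explicitly. Since the first‑coordinate interval $[ri,ri+s-1]$ of $C_i$ wraps around $0$ exactly for the $r$ indices $i\in\{m,m+1,\dots,m+r-1\}=\{m,\dots,s-1\}$, and such a cube has ``tail'' $[0,\,r(i-m)+r-1]$, a direct computation yields: for $x\in\{r^2,r^2+1,\dots,t-1\}$ the set $S(x)$ is a single block of at least $r+1$ consecutive integers inside $\{0,\dots,s-1\}$, while for $x=qr+\rho$ with $0\le q,\rho\le r-1$ (that is, $x<r^2$) one gets $S(x)=\{0,1,\dots,q\}\cup\{m+q,m+q+1,\dots,s-1\}$.

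\emph{Step 2} is to read off the cyclic gaps of $W(x)$ in each case. When $S(x)$ contains a block $\{i_0,\dots,i_0+r\}$ of $r+1$ consecutive indices, $W(x)$ contains the translate $wi_0+\{0,w,2w,\dots,rw\}$, whose cyclic gaps are $w$ (repeated) and $t-rw=\xi(r+1)-r$; both are $\le s=r^2+r+\xi$ because $1\le\xi\le r$, and passing to the superset $W(x)$ only shrinks gaps. When $x<r^2$, the congruence $wm\equiv m$ converts the image of $\{m+q,\dots,s-1\}$ into $m+\{qw,(q+1)w,\dots,(r-1)w\}$, so $W(x)=\{0,w,\dots,qw\}\cup\{m+qw,m+(q+1)w,\dots,m+(r-1)w\}$; sorting this (all elements are $<t$) shows its cyclic gaps are exactly $w$, $m$, and $\xi r+1$, all $\le s$ again by $\xi\le r$. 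In both cases $W(x)$ is $s$-dense, which finishes the proof.

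The main obstacle is \emph{Step 1}: the bookkeeping of exactly which cubes cover a prescribed first coordinate once their horizontal intervals start wrapping around the torus, and in particular proving the uniform bound $|S(x)|\ge r+1$ — a block of only $r$ consecutive indices would \emph{not} suffice, since $t-(r-1)w=r^2+r\xi+\xi+1>s$. The genuinely separate piece is the initial segment $x<r^2$, where $S(x)$ is a disjoint union of two blocks rather than one; this is where the identity $wm\equiv m\pmod t$ and the hypothesis $\xi\le r$ are actually needed to keep the three gap values $w,m,\xi r+1$ below $s$.
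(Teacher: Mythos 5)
Your proposal is correct and follows the same column-by-column strategy as the paper: fix the first coordinate, identify which squares meet that column, and verify that their second-coordinate ranges cover $\mathbb Z/t\mathbb Z$. The only real difference is in the step you flag as the ``main obstacle'': the paper avoids your explicit case split $x<r^2$ vs.\ $x\ge r^2$ and the identity $wm\equiv m\pmod t$ by working with the soft count that the column meets $\ge r+1$ \emph{consecutive} squares $B_i,\dots,B_{i+p}$ (from $x_1^0(i+p+1)-x_1^0(i-1)>s$ one gets $p\ge r$), and then simply observes that the consecutive second-coordinate intervals are tangent or overlapping with total span at least $s+(r-1)(r^2+r+1)+(\xi r+1)=t+r\ge t$ — which sidesteps computing $W(x)$ explicitly.
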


\begin{proof} 
Denote by $B_i$, $i\in\{0,1,\ldots,s-1\}$, the square with the base vertex $(ri, (r^2 + r + 1)i)$.
In what follows, we assume that $i$ runs over $\mathbb Z/s\mathbb Z$, i.e., $i+1=0$ for $i=s-1$.  

Fix $y\in\{0,1,\dots,t-1\}$. Let $C$ be the column consisting of all points whose first coordinate is~$y$. We show that $C$ is covered completely by the $B_i$.

Notice that the relative shift of the first coordinate of $B_{i+1}$ with respect to $B_i$ is
$x_1^0(i+1) - x_1^0(i) = r$ for $i\neq s-1$ and
$$
  x_1^0(0) - x_1^0(s-1) = t - r(s-1)
  = r+\xi
$$
for $i=s-1$. Since $r+\xi<s$, column $C$ meets several consecutive squares $B_i$, $B_{i+1}$, \dots, $B_{i+p}$. Since each of $B_{i-1}$ and $B_{i+p+1}$ does not meet~$C$, we have
$x_1^0(i+p+1)-x_1^0(i-1)>s$. Therefore,
$$
  r^2+r+\xi+1=s+1\leq x_1^0(i+p+1)-x_1^0(i-1)\leq (p+1)r+(r+\xi),
$$
which yields $p\geq r$. So, $C$ meets the squares $B_i,B_{i+1},\dots,B_{i+r}$.

Now, the relative shift of the second coordinate of $B_{i+1}$ with respect to $B_i$ is $x_2^0(i+1) - x_2^0(i) = r^2+r+1$ for $i\neq s-1$ and
$$
  x_2^0(0)-x_2^0(s-1)=t(r+1) - (r^2+r+1)(s-1) = \xi r + 1< r^2+r+1
$$
for $i=s-1$. This yields that the parts of $C$ covered with consecutive squares are either tangent or even overlapping. So the interval they cover consists of at least $\min\{t,s+(r-1)(r^2+r+1)+(r\xi+1)\}=t$ points, and hence the whole column is covered.
\end{proof}

\section{Proof of Theorem~\ref{first_interval}}
\label{th1_proof}

We start with the lower bounds for $\mu(3;\varepsilon)$ agreeing with the statement of the theorem. For $\varepsilon\in[1/2,4/7)\cup[3/5,1)$, such lower bounds follow from~\eqref{known-bounds}. For $\varepsilon\in[4/7,3/5)$ the lower bound follows from Theorem~\ref{right_interval}.

In order to show that those bounds are achieved, it suffices to show that $\mu_0(3;3,4)=4$, $\mu_0(3;2,3)=5$, and $\mu_0(3;3,5)=7$. The corresponding examples are provided by the following sets of base vertices:
\begin{align*}
  &(0, 0, 0),\; (1, 1, 1),\;  (2, 2, 2),\; (3, 3, 3), &&\text{for $s/t=3/4$;}\\
  &(0, 0, 0), \; (1, 1, 1),\; (1, 2, 2),\; (2, 1, 2),\; (2, 2, 1), &&\text{for $s/t=2/3$;}\\
  &(0,0,0),\; (1,1,3),\; (1,3,1),\; (2,4,4),\; (3,0,2),\; (3,2,0),\; (4,3,3), &&\text{for $s/t=3/5$.}
\end{align*}
The theorem is proved.

\section{Acknowledgements}

The study was supported by the Russian Science Foundation (grant number 21-71-10092).


\begin{thebibliography}{0}

\bibitem{Bollobas} B. Bollob\'{a}s, S. Janson, O. Riordan, {\it On covering by translates of a set}, Random Structures \& Algorithms, {\bf 38}:1-2 (2011) p. 33--67.

\bibitem{Shannon} A. E. Brouwer, A. Schrijver, {\it Uniform hypergraphs}, In: Packing and Covering in Combinatorics, A. Schrijver (Ed.), Math. Centre Tracts No. 106, Mathematisch Centrum, Amsterdam, 1979, pp. 39--73.

\bibitem{ErdosRogers} P. Erd\H{o}s, C.A. Rogers, {\it Covering space with convex bodies}, Acta Arithmetica. {\bf 7} (1962), p 281--285.

\bibitem{hardy-wright}  G.H.~Hardy, E.M.~Wright. An Introduction to the Theory of Numbers (5th ed.). Oxford University Press, 1979.

\bibitem{Lovasz} L. Lov\'{a}sz, {\it On the Shannon capacity of a graph}, IEEE Trans. Inform. Theory {\bf 25} (1979) p. 1--7.

\bibitem{2-dimension} R. J. Mceliece, H. Taylor, {\it Covering Tori with Squares}, Journal of Combinatorial Theory Ser. A, {\bf 14} (1973) p. 119--124.


\end{thebibliography}
\end{document}